\definecolor{labelkey}{rgb}{0.6,0,1}
\theoremstyle{plain}
\newtheorem{theorem}{Theorem}[section]
\newtheorem{hypothesis}[theorem]{Hypothesis}
\theoremstyle{definition}
\newtheorem{definition}[theorem]{Definition}
\def\bhyp#1{\begin{equation}\label{#1}\begin{array}{c}}
\def\ehyp{\end{array}\end{equation}}
\newcounter{cst}
\theoremstyle{remark}
\newtheorem{remark}[theorem]{Remark}
\numberwithin{equation}{section}
\numberwithin{figure}{section}
\newcommand{\RR}{{\mathbb R}}
\newcommand{\NN}{{\mathbb N}}
\def\O{\Omega}
\def\dsp{\displaystyle}
\def\bfn{\mathbf{n}}
\def\disc{{\mathcal D}}
\def\mesh{{\mathcal M}}
\newcommand{\polyd}{{\mathcal T}}
\def\edges{{\mathcal E}}
\def\edge{\sigma}
\def\xcv{x_K}
\def\cv{K}
\newcommand{\edgescv}{{{\edges}_K}}  
\newcommand{\edgesext}{{{\edges}_{\rm ext}}} 
\newcommand{\edgesint}{{{\edges}_{\rm int}}} 
\newcommand{\centers}{\mathcal{P}}
\def\dr{\partial}
\newcommand{\centeredge}{\overline{x}_\edge} 
\newif\ifcorr\corrtrue
\definecolor{violet}{rgb}{0.580,0.,0.827}
\def\bpsi{{\boldsymbol \psi}}
\newcommand{\ud}{\, \mathrm{d}} 
\def\div{{\rm div}}
\title[Gradient schemes for the Signorini and the obstacle problems]{Gradient schemes for the Signorini and the obstacle problems, and application to hybrid mimetic mixed methods}
\author{Yahya Alnashri}
\address[Yahya Alnashri]{School of Mathematical Sciences, Monash University, Victoria 3800, Australia\\
and Umm-Alqura University}
\email{yahya.alnashri@monash.edu\\
yanashri@ummalqura.edu}
\author{J\'er\^ome Droniou}
\address[J\'er\^ome Droniou]{School of Mathematical Sciences, Monash University, Victoria 3800, Australia.}
\email{jerome.droniou@monash.edu}
\subjclass[2010]{35J86, 65N12, 65N15, 76S05}
\keywords{Elliptic variational inequalities, gradient schemes, obstacle problem, Signorini boundary conditions, error estimates, hybrid mimetic mixed methods.}
\date{\today}
\begin{document}


\begin{abstract}
Gradient schemes is a framework which enables the unified convergence analysis of many different methods -- such as finite elements (conforming, non-conforming and mixed) and finite volumes methods -- for $2^{\rm nd}$ order diffusion equations. We show in this work that the gradient schemes framework can be extended to variational inequalities involving mixed Dirichlet, Neumann and Signorini boundary conditions. 
This extension allows us to provide error estimates for numerical approximations of such models,
recovering known convergence rates for some methods, and establishing new convergence rates for
schemes not previously studied for variational inequalities. The general framework we develop
also enables us to design a new numerical method for the obstacle and Signorini problems,
based on hybrid mimetic mixed schemes. We provide
numerical results that demonstrate the accuracy of these schemes, and confirm our theoretical rates
of convergence.
\end{abstract}

\maketitle



\section{Introduction}
\label{introduction}
\noindent Numerous problems arising in fluid dynamics, elasticity, biomathematics, mathematical economics and 
control theory are modelled by second order partial differential equations with one-sided constraints on the solution or
its normal derivative on the boundary. Such models also appear in
free boundary problems, in which partial differential equations (PDEs) are written on a domain whose boundary is not given but has to be located as a part of the solution. From the mathematical point of view, these models
can be recast into variational inequalities \cite{G1,G12,G13}.
Linear variational inequalities are used in the study of
triple points in electrochemical reactions \cite{KDJPHM}, of
contact in linear elasticity \cite{hild2007}, and of lubrication phenomena
\cite{capriz1977}. Other models involve non-linear variational inequalities;
this is for example the case of unconfined seepage models, which involve
non-linear quasi-variational inequalities obtained through the Baiocchi transform \cite{baiocchi},
or quasi-linear classical variation inequalities through the extension of the Darcy
velocity inside the dry domain \cite{A1,Desai.Li:83,A25,A26}.
It is worth mentioning that several of these linear and non-linear models involve possibly heterogeneous and
anisotropic tensors. For linear equations, this happens in models of contact elasticity problems involving
composite materials (for which the stiffness tensor depends on the position),
and in lubrication problems (the tensor is a function of the first fundamental form of the film).
For non-linear models, heterogeneity and anisotropy are encountered in
seepage problems in porous media.

In this work, we consider two types of linear elliptic variational inequalities: the Signorini problem
and the obstacle problem. The Signorini problem is formulated as
\begin{align}
-\mathrm{div}(\Lambda\nabla\bar{u})= f &\mbox{\quad in $\Omega$,} \label{seepage1}\\
\bar{u} =0 &\mbox{\quad on $\Gamma_{1}$,} \label{seepage2}\\
\Lambda\nabla\bar{u}\cdot \mathbf{n}= 0 &\mbox{\quad on $\Gamma_{2}$,} \label{seepage3}\\
\left.
\begin{array}{r}
\dsp \bar{u} \leq a\\
\dsp \Lambda\nabla\bar{u}\cdot \mathbf{n} \leq 0\\
\dsp \Lambda\nabla\bar{u}\cdot \mathbf{n} (a-\bar{u}) = 0
\end{array} \right\} 
& \mbox{\quad on}\; \Gamma_{3}.
\label{sigcondition}
\end{align}
The obstacle problem is
\begin{align}
(\mathrm{div}(\Lambda\nabla\bar{u})+f)(g-\bar{u}) &= 0 \mbox{\quad in $\Omega$,} \label{obs1}\\
-\mathrm{div}(\Lambda\nabla\bar{u}) &\leq f \mbox{\quad in $\Omega$,} \label{obs2}\\
\bar{u}&\leq g \mbox{\quad in $\Omega$,} \label{obs3}\\
\bar{u} &= 0 \mbox{\quad on $\partial\Omega$.} \label{obs4}
\end{align}
Here $\Omega$ is a bounded open set of $\RR^d$ ($d\ge 1$),
$\mathbf{n}$ is the unit outer normal to $\partial\Omega$ and $(\Gamma_1,\Gamma_2,\Gamma_3)$
is a partition of $\partial\Omega$ (precise assumptions are stated in the next section).

Mathematical theories associated with the existence, uniqueness and stability of the solutions to variational inequalities have been extensively developed \cite{G10,G12,G1}.
From the numerical point of view, different methods have been considered to
approximate variational inequalities. Bardet and Tobita \cite{A9} applied a finite difference scheme to the unconfined seepage problem. Extensions of the discontinuous Galerkin method to solve the obstacle problem can be found in \cite{A13,A14}. Although this method is still applicable when the functions are discontinuous along the elements boundaries, the exact solution has to be in the space $H^{2}$ to ensure the consistency and the convergence. Numerical verification methods, which aim at finding a set in which a solution exists, have been developed for a few variational inequality problems. In particular we cite the obstacle problems, the Signorini problem and elasto-plastic problems (see \cite{A19} and references therein).

Falk \cite{F1} was the first to provide a general error estimate for the approximation by conforming methods of the obstacle problem. This estimate showed that the $\mathbb{P}_1$ finite element error is $\mathcal{O}(h)$. Yang and Chao \cite{A28} showed that the convergence rate of non-conforming finite elements method for the Signorini problem has order one, under an $H^{5/2}$-regularity assumption on the solution.

Herbin and Marchand \cite{A2} showed that if $\Lambda \equiv I_{d}$ and if the grid
satisfies the orthogonality condition required by the two-point flux approximation (TPFA) finite volume
method, then for both problems the solutions provided by this scheme converge in $L^{2}(\Omega)$ to the unique solution as the mesh size tends to zero.

The gradient schemes framework is a discretisation of weak variational formulations of PDEs using a small number of discrete elements and properties. 
Previous studies \cite{B10} showed that this framework includes many well-known numerical schemes:
conforming, non-conforming and mixed finite elements methods (including the non-conforming ``Crouzeix--Raviart'' method and the  Raviart--Thomas method),
hybrid mimetic mixed methods (which contain hybrid mimetic finite differences,
hybrid finite volumes/SUSHI scheme and mixed finite volumes), nodal mimetic finite differences,
and finite volumes methods (such as some multi-points flux approximation and discrete duality finite volume methods). Boundary conditions can be considered in various
ways within the gradient schemes framework, which is a useful flexibility for
some problems and engineering constraints (e.g. available meshes, etc.).
This framework has been analysed for several linear and non-linear elliptic and parabolic problems, including the Leary-Lions, Stokes, Richards, Stefan's and elasticity equations. It is noted that, for models based on linear operators as in \eqref{seepage1} only three properties are sufficient to obtain the convergence of schemes for the corresponding PDEs. We refer the reader to
\cite{B1,B2,B9,S1,DE-fvca7,zamm2013,eym-12-stef} for more details.

The aim of this paper is to develop a gradient schemes framework for elliptic linear variational inequalities with different types of boundary conditions. Although we focus here on scalar models, as opposed to variational inequalities
involving displacements, we notice that using \cite{B9} our results can be easily adapted to elasticity models.
The gradient scheme framework enables us to design a unified convergence analysis of many numerical methods for variational inequalities; using the theorems stemming from this analysis,
we recover known estimates for some schemes, and we establish new estimates for methods which were not, to our best
knowledge, previously studied for variational inequalities. In particular, we apply our results
to the hybrid mimetic mixed (HMM) method of \cite{B5}, which contains the mixed/hybrid mimetic finite difference methods and, on certain grids (such as triangular grids), the TPFA scheme.
We would like to point out that, to our best knowledge, although several papers studied
the obstacle problem for \emph{nodal} mimetic finite differences (see e.g. \cite{A-32,MFD2}),
no study has been conducted of the mixed/hybrid mimetic method for the obstacle problem,
or of any mimetic scheme for the Signorini problem. Our analysis through the gradient
schemes framework, and our application to the HMM method therefore seems to provide
entirely new results for these schemes on meaningful variational inequality models.
We also mention that we are mostly interested in low-order methods, since the models we consider could involve discontinuous data -- as in heterogeneous media -- which prevent the solution from being smooth. Finally, it seems to us that our unified analysis gives simpler proofs than the previous analyses in the literature. The study of linear models is a first step towards a complete analysis of non-linear models, a work already initiated in \cite{Y}.

This paper is organised as follows. In Section \ref{main result}, we present the gradient
schemes framework for variational inequalities, and we state our main error estimates. We then show in Section \ref{sec:examples} that several methods, of practical relevance
for anisotropic heterogeneous elliptic operators, fit into this framework
and therefore satisfy the error estimates established in Section \ref{main result}.
This also enables us to design and analyse for the first time an HMM scheme for
variational inequalities. We provide in Section \ref{sec:numer} numerical results that demonstrate
the excellent behaviour of this new scheme on test cases from the literature. 
Available articles on numerical methods for variational inequalities do not seem
to present any test case with analytic solutions, on which theoretical rates of convergence can be rigorously checked; we therefore develop
in Section \ref{sec:numer} such a test case, and we use it to assess the practical convergence rates of HMM for the Signorini problem.
Section \ref{proof} is devoted to the proof of the main results. Since we deal here with possibly nonconforming schemes, the technique used in \cite{F1} cannot be used to obtain error estimates. Instead, we develop a similar technique as in \cite{B2} for gradient schemes for PDEs.
Dealing with variational inequalities however requires us to establish new estimates with additional terms. These additional terms apparently lead to a
degraded rate of convergence, but we show that the optimal rates can be easily recovered
for many classical methods. We present in Section \ref{sec:approxbarriers} an extension of
our results to the case of approximate barriers, which is natural in many schemes in which the
exact barrier is replaced with an interpolant depending on the approximation space.
The paper is completed by two short sections: a conclusion, and an appendix recalling the
definition of the normal trace of $H_{\div}$ functions.



\section{Assumptions and main results}
\label{main result}


\subsection{Weak formulations} 
Let us start by stating our assumptions and weak formulations for the
Signorini and the obstacle problems.

\begin{hypothesis}[Signorini problem] We make the following assumptions on the data in \eqref{seepage1}--\eqref{sigcondition}:
\begin{enumerate}
\item $\Omega$ is an open bounded connected subset of $\mathbb{R}^{d}$, $d\ge 1$ and $\O$ has a Lipschitz boundary,
\item ${\Lambda}$ is a measurable function from $\Omega$ to $\mathrm{M}_{d}(\mathbb{R})$ (where $\mathrm{M}_{d}(\mathbb{R})$ is the set of $d\times d$ matrices) and there exists $\underline{\lambda}$, $\overline{\lambda} >0$ such that, for a.e. $x \in \Omega$, $\Lambda(x)$ is symmetric with eigenvalues in $[\underline{\lambda},\overline{\lambda}]$,
\item $\Gamma_{1}, \Gamma_{2}$ and $\Gamma_{3}$ are measurable pairwise disjoint
subsets of $\partial\Omega$ such that $\Gamma_{1}\cup \Gamma_{2} \cup \Gamma_{3} = \partial\Omega$ and $\Gamma_{1}$ has a positive $(d-1)$-dimensional measure,
\item $f \in L^{2}(\Omega)$, $a\in L^2(\partial\Omega)$.
\end{enumerate}
\label{hyseepage}\end{hypothesis}

Under Hypothesis \ref{hyseepage}, the weak formulation of Problem \eqref{seepage1}--\eqref{sigcondition} is 
\begin{equation}\label{weseepage}
\left\{
\begin{array}{ll}
\dsp \mbox{Find}\; \bar{u} \in \mathcal{K}:=\{ v \in H^{1}(\Omega)\; : \; \gamma(v)=0\; \mbox{on}\; \Gamma_{1},\; \gamma(v) \leq a\; \mbox{on}\; \Gamma_{3}\}\; \mbox{ s.t.},
\\
\dsp \forall v\in \mathcal{K}\,,\quad
\int_\O \Lambda(x)\nabla \bar{u}(x) \cdot \nabla(\bar{u}-v)(x)\ud x
\leq \int_\O f(x)(\bar{u}(x)-v(x))\ud x\,,
\end{array}
\right.
\end{equation}
where $\gamma:H^1(\O) \mapsto H^{1/2}(\partial\O)$ is the trace operator.
We refer the reader to \cite{A2-17} for the proof of equivalence between the strong and the weak formulations. It was shown in \cite{G10} that, if $\mathcal K$ is not empty (which is the case, for example, if
$a\ge 0$ on $\partial\Omega$) then there exists a unique solution to Problem \eqref{weseepage}. In the sequel, we will assume that the barrier $a$ is such that $\mathcal K$ is not empty.

\begin{hypothesis}[Obstacle problem] Our assumptions on the data in \eqref{obs1}--\eqref{obs4} are:
\begin{enumerate}
\item $\Omega$ and $\Lambda$ satisfy (1) and (2) in Hypothesis \ref{hyseepage},
\item $f \in L^{2}(\O)$, $g \in L^{2}(\O)$.
\end{enumerate}
\label{hyobs}\end{hypothesis}

Under Hypotheses \ref{hyobs}, we consider the obstacle problem \eqref{obs1}--\eqref{obs4}
under the following weak form: 
\begin{equation}\label{weobs}
\left\{
\begin{array}{ll}
\dsp \mbox{Find}\; \bar{u} \in \mathcal{K}:=\{ v \in H_{0}^{1}(\Omega)\; : \; v \leq g\; \mbox{in}\; \Omega\}\;\mbox{ such that, for all $v\in \mathcal{K}$,}\\
\dsp\int_\O \Lambda(x)\nabla \bar{u}(x) \cdot \nabla(\bar{u}-v)(x)\ud x
\leq \int_\O f(x)(\bar{u}(x)-v(x))\ud x.
\end{array}
\right.
\end{equation}
It can be seen \cite{G11} that if $\bar{u}\in C^{2}(\overline{\O})$ and $\Lambda$ is Lipschitz continuous, then 
\eqref{obs1}--\eqref{obs4} and \eqref{weobs} are indeed equivalent. The proof of this equivalence can be easily adapted
to the case where the solution belongs to $H^{2}(\O)$. If $g$ is such that $\mathcal K$ is not empty, which we assume from here on, then \eqref{weobs} has a unique solution



\subsection{Construction of gradient schemes}
\label{gradient discretisation}
Gradient schemes provide a general formulation of different numerical methods. Each gradient scheme is based on a gradient discretisation, which is a set of discrete space
and operators used to discretise the weak formulation of the problem under study. Actually, a gradient
scheme consists of replacing the continuous space and operators used in the weak formulations by
the discrete counterparts provided by a gradient discretisation. In this part, we define gradient discretisations for the Signorini and the obstacle problems, and we list the properties
that are required of a gradient discretisation to give rise to a converging gradient scheme.


\subsubsection{Signorini problem} \label{sec:signorini_problem}

\begin{definition}(Gradient discretisation for the Signorini problem). A gradient discretisation $\mathcal{D}$ for Problem \eqref{weseepage} is defined by $\mathcal{D}=(X_{\mathcal{D}, \Gamma_{1}}, \Pi_{\mathcal{D}}, \mathbb{T}_{\mathcal{D}}, \nabla_{\mathcal{D}})$, where
\begin{enumerate}
\item the set of discrete unknowns $X_{\mathcal{D}, \Gamma_{1}}$ is a finite dimensional vector space on $\mathbb{R}$, taking into account the homogeneous boundary conditions on $\Gamma_{1}$,
\item the linear mapping $\Pi_{\mathcal{D}} : X_{\mathcal{D}, \Gamma_{1}} \longrightarrow L^{2}(\Omega)$ is the reconstructed function,
\item the linear mapping $\mathbb{T}_{\mathcal{D}} : X_{\mathcal{D}, \Gamma_{1}}\longrightarrow H_{\Gamma_{1}}^{1/2}(\partial\Omega)$ is the reconstructed trace,
where $H^{1/2}_{\Gamma_1}(\partial\O)=\{v\in H^{1/2}(\partial\O)\,:\,v=0\mbox{ on }\Gamma_1\}$,
\item the linear mapping $\nabla_{\mathcal{D}} : X_{\mathcal{D}, \Gamma_{1}} \longrightarrow L^{2}(\Omega)^{d}$ is a reconstructed gradient, which must be defined such that $\|\nabla_{\mathcal{D}}\cdot\|_{L^{2}(\Omega)^{d}}$ is a norm on $X_{\mathcal{D},{\Gamma_1}}$.
\end{enumerate}
\label{gdseepage}\end{definition}

As explained above, a gradient scheme for \eqref{weseepage}
is obtained by simply replacing in the weak formulation of the problem
the continuous space and operators by
the discrete space and operators coming from a gradient discretisation.

\begin{definition}(Gradient scheme for the Signorini problem). Let $\mathcal{D}$ be the gradient dicretisation in the sense of Definition \ref{gdseepage}. The gradient scheme scheme for Problem \eqref{weseepage} is
\begin{equation}\label{gsseepage}
\left\{
\begin{array}{ll}
\dsp \mbox{Find}\; u \in \mathcal{K}_{\mathcal{D}}:=\{ v \in X_{\mathcal{D},\Gamma_1}\; : \; \mathbb{T}_{\mathcal{D}}v \leq a\; \mbox{on}\; \Gamma_{3}\}\; \mbox{ such that, $\forall v\in \mathcal{K}_{\mathcal{D}}$},\\
\dsp\int_\O \Lambda(x)\nabla_{\mathcal{D}}u(x) \cdot \nabla_{\mathcal{D}}(u-v)(x)\ud x
\leq \int_\O f(x)\Pi_{\mathcal{D}}{(u-v)(x)}\ud x.
\end{array}
\right.
\end{equation}
\end{definition}

The accuracy of a gradient discretisation is measured through three indicators, related to its
\emph{coercivity}, \emph{consistency} and \emph{limit-conformity}. The good behaviour of these indicators
along a sequence of gradient discretisations ensures that the solutions to the corresponding gradient
schemes converge towards the solution to the continuous problem.

To measure the coercivity of a gradient discretisation $\mathcal{D}$ in the sense of Definition \ref{gdseepage}, we define the norm $C_{\mathcal{D}}$ of the linear mapping $\Pi_{\mathcal D}$ by
\begin{eqnarray} C_{\mathcal{D}} =  {\displaystyle \max_{v \in X_{\mathcal{D},\Gamma_{1}}\setminus\{0\}}\Big(\frac{\|\Pi_{\mathcal{D}}v\|_{L^{2}(\Omega)}}{\|\nabla_{\mathcal{D}} v\|_{L^{2}(\Omega)^{d}}}}+ \frac{\|\mathbb{T}_{\mathcal{D}}v\|_{H^{1/2}(\partial\Omega)}}{\|\nabla_{\mathcal{D}} v\|_{L^{2}(\Omega)^{d}}}\Big).
\label{coercivityseepage}
\end{eqnarray}

The consistency of a gradient discretisation $\mathcal{D}$ in the sense of Definition \ref{gdseepage} is measured by $S_{\mathcal{D}} : \mathcal{K}\times \mathcal K_\disc \longrightarrow [0, +\infty)$ defined by
\begin{equation}
\forall (\varphi,v) \in \mathcal{K}{\times}{\mathcal K_\disc}
, \; S_{\mathcal{D}}(\varphi,v)= \| \Pi_{\mathcal{D}} v - \varphi \|_{L^{2}(\Omega)} 
+ \| \nabla_{\mathcal{D}} v - \nabla \varphi \|_{L^{2}(\Omega)^{d}}.
\label{consistencyseepage}
\end{equation}

To measure the limit-conformity of the gradient discretisation $\mathcal{D}$ in the sense of Definition \ref{gdseepage}, we introduce $W_{\mathcal{D}} : H_{\rm{div}}(\Omega) \longrightarrow [0, +\infty)$ defined by
\begin{multline}
\forall \bpsi \in H_{\rm{div}}(\Omega),\\
W_{\mathcal{D}}(\bpsi)
 = \sup_{v\in X_{\mathcal{D},\Gamma_{1}}\setminus \{0\}}\frac{1}{\|\nabla_{\mathcal{D}} v\|_{L^{2}(\Omega)^{d}}} \Big|\int_{\Omega}(\nabla_{\mathcal{D}}v\cdot \bpsi + \Pi_{\mathcal{D}}v \cdot\mathrm{div} (\bpsi)) \ud x
 -\langle \gamma_{\bfn}(\bpsi), \mathbb{T}_{\mathcal{D}}v \rangle \Big|,
\label{conformityseepage}
\end{multline}
where $H_{\rm{div}}(\Omega)= \{\bpsi \in L^{2}(\Omega)^{d}{\;:\;} \mathrm{div}\bpsi \in L^{2}(\Omega)\}$. We refer the reader to the appendix (Section \ref{traceoperator})
for the definitions of the normal trace $\gamma_{\bfn}$ on $H_{\div}(\O)$, and of
the duality product $\langle \cdot, \cdot\rangle$ between $(H^{1/2}(\partial\Omega))'$ and $H^{1/2}(\partial\Omega)$.

In order for a sequence of gradient discretisations $(\mathcal D_m)_{m\in\NN}$ to provide
converging gradient schemes for PDEs, it is expected that $(C_{\mathcal D_m})_{m\in\NN}$ remains bounded
and that $(\inf_{v\in \mathcal K_{\disc_m}}S_{\mathcal D_m}(\cdot,v))_{m\in\NN}$
and $(W_{\mathcal D_m})_{m\in\NN}$ converge pointwise to $0$. These properties are precisely
called the \emph{coercivity}, the \emph{consistency} and the \emph{limit-conformity} of
the sequence of gradient discretisations $(\mathcal D_m)_{m\in\NN}$, see \cite{B1}.

The definition \eqref{coercivityseepage} of $C_{\mathcal D}$ does not only include
the norm of $\Pi_{\mathcal D}$, as
in the obstacle problem case detailed below, but also quite naturally
the norm of the other reconstruction operator $\mathbb{T}_{\mathcal D}$.

The definition \eqref{conformityseepage} takes into account the non-zero boundary conditions
on a part of $\partial\Omega$. This was already noticed in the case of gradient discretisations adapted for
PDEs with mixed boundary conditions, but a major difference must be raised here.
$W_{\mathcal{D}}$ will be applied to $\bpsi = \Lambda\nabla\bar{u}$. For PDEs \cite{B1,B2,S1}
the boundary conditions ensure that $\gamma_{\bfn}(\bpsi) \in L^{2}(\partial\Omega)$ and thus that $\langle \gamma_{\bfn}(\bpsi), \mathbb{T}_{\mathcal{D}}v \rangle$
can be replaced with $\int_{\Gamma_{3}}\gamma_{\bfn}(\bpsi)\mathbb{T}_{\mathcal{D}}{v} \ud x$
in $W_\disc$. Hence, in the study of gradient schemes for PDEs with mixed boundary conditions, $\mathbb{T}_{\mathcal{D}}v$ only needs to be in $L^{2}(\partial\Omega)$. For the Signorini problem we cannot ensure that $\gamma_{\bfn}(\bpsi) \in L^{2} (\partial\Omega)$;
we only know that $\gamma_{\bfn}(\bpsi) \in H^{1/2}(\partial\Omega)$. 
The definition of $\mathbb{T}_\disc$ therefore needed to be changed
to ensure that this reconstructed trace
takes values in $H^{1/2}(\partial\Omega)$ instead of $L^2(\partial\Omega)$.


\subsubsection{Obstacle problem}

Gradient schemes for the obstacle problems already appear in \cite{Y}. We just recall the
following definitions for the sake of legibility.

The definition of a gradient discretisation for the obstacle problem is not different from
the definition of a gradient discretisation for elliptic PDEs with homogeneous Dirichlet conditions
\cite{B1,B2}. 

\begin{definition}
(Gradient discretisation for the obstacle problem). A gradient discretisation $\mathcal{D}$ for homogeneous Dirichlet boundary conditions is defined by a triplet $\mathcal{D}=(X_{\mathcal{D},0}, \Pi_{\mathcal{D}},\nabla_{\mathcal{D}})$, where
\begin{enumerate}
\item the set of discrete unknowns $X_{\mathcal{D},0}$ is a finite dimensional vector space over $\mathbb{R}$, taking into account the boundary condition \eqref{obs4}.
\item the linear mapping $\Pi_{\mathcal{D}} : X_{\mathcal{D},0} \longrightarrow L^{2}(\Omega)$ gives the reconstructed function,
\item the linear mapping $\nabla_{\mathcal{D}} : X_{\mathcal{D},0} \longrightarrow L^{2}(\Omega)^{d}$ gives a reconstructed gradient, which must be defined such that $\|\nabla_{\mathcal{D}}\cdot\|_{L^{2}(\Omega)^{d}}$ is a norm on $X_{\mathcal{D},0}$.
\end{enumerate}
\label{gdobs}\end{definition}


\begin{definition}
(Gradient scheme for the obstacle problem). Let $\mathcal{D}$ be a gradient discretisation in the sense of Definition \ref{gdobs}. The corresponding gradient scheme for (\ref{weobs}) is given by
\begin{equation}\label{gsobs}
\left\{
\begin{array}{ll}
\dsp \mbox{Find}\; u \in \mathcal{K}_{\mathcal{D}}:=\{ v \in X_{\mathcal{D},0}\; : \; \Pi_{\mathcal{D}} v \leq g\; \mbox{in}\; \Omega\}\; 
\mbox{ such that, $\forall v\in \mathcal{K}_{\mathcal{D}}$},\\
\dsp\int_\O \Lambda(x)\nabla_{\mathcal{D}}u(x) \cdot \nabla_{\mathcal{D}}(u-v)(x)\ud x
\leq \int_\O f(x)\Pi_{\mathcal{D}}{(u-v)(x)}\ud x.
\end{array}
\right.
\end{equation}
\end{definition}

The coercivity, consistency and limit-conformity of a gradient discretisation in the sense
of Definition \ref{gdobs} are defined through the following constant and functions:

\begin{eqnarray} C_{\mathcal{D}} =  {\displaystyle \max_{v \in X_{\mathcal{D},0}\setminus\{0\}}\frac{\|\Pi_{\mathcal{D}}v\|_{L^{2}(\Omega)}}{\|\nabla_{\mathcal{D}} v\|_{L^{2}(\Omega)^{d}}}},
\label{coercivityobs}
\end{eqnarray}
\begin{equation}
\forall (\varphi,v) \in \mathcal{K}\times \mathcal K_\disc, \; S_{\mathcal{D}}(\varphi,v)= \| \Pi_{\mathcal{D}} v - \varphi \|_{L^{2}(\Omega)} 
+ \| \nabla_{\mathcal{D}} v - \nabla \varphi \|_{L^{2}(\Omega)^{d}},
\label{consistencyobs}
\end{equation}
and
\begin{equation}
\forall \bpsi \in H_{\mathrm{div}}(\Omega),\;
W_{\mathcal{D}}(\bpsi)=
 \sup_{v\in X_{\mathcal{D},0}\setminus \{0\}}\frac{1}{\|\nabla_{\mathcal{D}} v\|_{L^{2}(\Omega)^{d}}} \Big|\int_{\Omega}(\nabla_{\mathcal{D}}v\cdot \bpsi + \varPi_{\mathcal{D}}v \cdot\mathrm{div} (\bpsi)) \ud x
\Big|.
\label{conformityobs}
\end{equation}

The only indicator that changes with respect to \cite{B1,B2} is $S_\disc$. For PDEs,
the consistency requires to consider $S_{\mathcal D}$ on $H^1_0(\Omega)\times X_{\mathcal D,0}$
and to ensure that $\inf_{v\in X_{\mathcal D_m}}S_{\mathcal D_m}(\varphi,v)\to 0$
as $m\to\infty$ for any $\varphi\in H^1_0(\Omega)$.
Here, the domain of $\mathcal S_{\mathcal D}$ is adjusted to the set to which
the solution of the variational inequality belongs (namely $\mathcal K$), and to the set in which
we can pick the test functions of the gradient scheme (namely $\mathcal K_\disc$).

We note that this double reduction does not necessarily facilitate, with respect to the PDEs
case, the proof of the consistency of the sequence of gradient discretisations. In practice, however,
the proof developed for the PDE case also provides the consistency of gradient discretisations
for variational inequalities.




\subsection{Error estimates}
We present here our main error estimates for the gradient schemes approximations of Problems \eqref{weseepage} and \eqref{weobs}.


\subsubsection{Signorini problem}

\begin{theorem}[Error estimate for the Signorini problem] Under Hypothesis \ref{hyseepage},
let $\bar{u} \in \mathcal{K}$ be the solution to Problem (\ref{weseepage}). 
If $\mathcal{D}$ is a gradient discretisation in the sense of Definition \ref{gdseepage} and $\mathcal{K}_{\mathcal{D}}$ is non-empty, then there exists a unique solution $u \in \mathcal K_\disc$ to the gradient scheme (\ref{gsseepage}). Furthermore, this solution satisfies the following inequalities, for any $v_\disc\in\mathcal K_\disc$:
\begin{multline}
\|\nabla_{\mathcal{D}}u - \nabla\bar{u}\|_{L^{2}(\Omega)^{d}} \leq\\
\sqrt{\frac{2}{\underline{\lambda}}G_{\mathcal{D}}(\bar{u},v_{\mathcal{D}})^{+}}
+
\frac{1}{\underline{\lambda}}
[W_{\mathcal{D}}(\Lambda\nabla\bar{u})+(\overline{\lambda}+\underline{\lambda})
S_{\mathcal{D}}(\bar{u}, v_{\mathcal{D}})]
,
\label{errorseepage1}
\end{multline}
and
\begin{multline}
\|\Pi_{\mathcal{D}}u - \bar{u}\|_{L^{2}(\Omega)} \leq\\
C_{\mathcal{D}}\sqrt{\frac{2}{\underline{\lambda}}G_{\mathcal{D}}(\bar{u},v_{\mathcal{D}})^+}+
\frac{1}{\underline{\lambda}}
[{C_\disc}W_{\mathcal{D}}(\Lambda\nabla\bar{u})+
(C_{\mathcal{D}}\overline{\lambda}+\underline{\lambda})S_{\mathcal{D}}(\bar{u},v_{\mathcal{D}})],
\label{errorseepage2}\end{multline}
where
\begin{equation}
G_{\mathcal{D}}(\bar{u}, v_{\mathcal{D}})= \langle \gamma_\bfn(\Lambda\nabla\bar u), \mathbb{T}_{\mathcal{D}}v_{\mathcal{D}}-\gamma\bar{u} \rangle 
\quad\mbox{ and }\quad G_{\disc}^{+}= {\rm max}(0,G_{\disc}).  
\end{equation}
Here, the quantities $C_{\mathcal{D}}$, $S_{\mathcal{D}}(\bar{u}$, $v_{\mathcal{D}})$ and $W_{\mathcal{D}}$ are defined by \eqref{coercivityseepage}, \eqref{consistencyseepage} and \eqref{conformityseepage}. 
\label{errorseepage}
\end{theorem}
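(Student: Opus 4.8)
I would first note that $\mathcal{K}_{\mathcal{D}}$ is a non-empty (by assumption) closed convex subset of the finite-dimensional space $X_{\mathcal{D},\Gamma_1}$, closedness and convexity following from the linearity of $\mathbb{T}_{\mathcal{D}}$ and the one-sided nature of the constraint $\mathbb{T}_{\mathcal{D}} v\le a$. The bilinear form $(v,v')\mapsto\int_\O\Lambda\nabla_{\mathcal{D}} v\cdot\nabla_{\mathcal{D}} v'\ud x$ is continuous and coercive on $X_{\mathcal{D},\Gamma_1}$, because $\Lambda$ has eigenvalues bounded below by $\underline{\lambda}>0$ and $\|\nabla_{\mathcal{D}}\cdot\|_{L^2(\O)^d}$ is a norm by Definition~\ref{gdseepage}; the right-hand side $v\mapsto\int_\O f\,\Pi_{\mathcal{D}} v\ud x$ is a continuous linear form. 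Existence and uniqueness of $u\in\mathcal{K}_{\mathcal{D}}$ then follow from the Lions--Stampacchia theorem for coercive elliptic variational inequalities.

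\textbf{A quadratic energy inequality.} The two estimates will both come from a bound on $F:=\|\nabla_{\mathcal{D}}(u-v_{\mathcal{D}})\|_{L^2(\O)^d}$ for a fixed $v_{\mathcal{D}}\in\mathcal{K}_{\mathcal{D}}$; working with $u-v_{\mathcal{D}}$ rather than with $\nabla_{\mathcal{D}} u-\nabla\bar u$ directly is what keeps the final bound linear in $W_{\mathcal{D}}$ and $S_{\mathcal{D}}$. Setting $w=u-v_{\mathcal{D}}\in X_{\mathcal{D},\Gamma_1}$ and using coercivity of $\Lambda$, I would start from $\underline{\lambda}F^2\le\int_\O\Lambda\nabla_{\mathcal{D}} w\cdot\nabla_{\mathcal{D}} w\ud x$ and split $\nabla_{\mathcal{D}} w=(\nabla_{\mathcal{D}} u-\nabla\bar u)-(\nabla_{\mathcal{D}} v_{\mathcal{D}}-\nabla\bar u)$. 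The second piece is controlled by $\overline{\lambda}S_{\mathcal{D}}(\bar u,v_{\mathcal{D}})F$ via Cauchy--Schwarz and \eqref{consistencyseepage}. For the first piece, testing \eqref{gsseepage} with $v_{\mathcal{D}}$ gives $\int_\O\Lambda\nabla_{\mathcal{D}} u\cdot\nabla_{\mathcal{D}} w\ud x\le\int_\O f\,\Pi_{\mathcal{D}} w\ud x$, and I would rewrite $\int_\O f\,\Pi_{\mathcal{D}} w$ through the limit-conformity functional \eqref{conformityseepage} applied to $\bpsi=\Lambda\nabla\bar u$. Here one uses that testing \eqref{weseepage} with $v=\bar u\pm\varphi$, $\varphi\in C_c^\infty(\O)$, forces $-\div(\Lambda\nabla\bar u)=f$, so that $\Lambda\nabla\bar u\in H_{\div}(\O)$ and $W_{\mathcal{D}}(\Lambda\nabla\bar u)$ is well defined. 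This yields
\[
\int_\O\Lambda(\nabla_{\mathcal{D}} u-\nabla\bar u)\cdot\nabla_{\mathcal{D}} w\ud x\le -\langle\gamma_\bfn(\Lambda\nabla\bar u),\mathbb{T}_{\mathcal{D}} w\rangle+W_{\mathcal{D}}(\Lambda\nabla\bar u)\,F.
\]

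\textbf{Sign of the boundary term (the main obstacle).} The heart of the proof is to show that the boundary pairing above contributes no worse than $G_{\mathcal{D}}(\bar u,v_{\mathcal{D}})^+$. Writing $\mathbb{T}_{\mathcal{D}} w=\mathbb{T}_{\mathcal{D}} u-\mathbb{T}_{\mathcal{D}} v_{\mathcal{D}}$ and inserting $\gamma\bar u$, one gets $-\langle\gamma_\bfn(\Lambda\nabla\bar u),\mathbb{T}_{\mathcal{D}} w\rangle=G_{\mathcal{D}}(\bar u,v_{\mathcal{D}})-\langle\gamma_\bfn(\Lambda\nabla\bar u),\mathbb{T}_{\mathcal{D}} u-\gamma\bar u\rangle$, and I would prove that the last pairing is non-negative. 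For this, Green's formula for $\Lambda\nabla\bar u\in H_{\div}(\O)$ combined with \eqref{weseepage} gives the weak complementarity relation $\langle\gamma_\bfn(\Lambda\nabla\bar u),\gamma v-\gamma\bar u\rangle\ge 0$ for every $v\in\mathcal{K}$. It then remains to recognise that $\mathbb{T}_{\mathcal{D}} u$ is itself admissible as a boundary trace: since $u\in\mathcal{K}_{\mathcal{D}}$, the element $\mathbb{T}_{\mathcal{D}} u\in H^{1/2}_{\Gamma_1}(\partial\O)$ vanishes on $\Gamma_1$ and satisfies $\mathbb{T}_{\mathcal{D}} u\le a$ on $\Gamma_3$, so by surjectivity of the trace operator there exists $v\in\mathcal{K}$ with $\gamma v=\mathbb{T}_{\mathcal{D}} u$. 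Applying the complementarity relation to this $v$ gives $\langle\gamma_\bfn(\Lambda\nabla\bar u),\mathbb{T}_{\mathcal{D}} u-\gamma\bar u\rangle\ge 0$, hence $-\langle\gamma_\bfn(\Lambda\nabla\bar u),\mathbb{T}_{\mathcal{D}} w\rangle\le G_{\mathcal{D}}(\bar u,v_{\mathcal{D}})^+$. This is exactly the step that requires $\mathbb{T}_{\mathcal{D}}$ to be valued in $H^{1/2}(\partial\O)$ (so that $\mathbb{T}_{\mathcal{D}} u$ lies in the range of $\gamma$ restricted to $\mathcal{K}$), and I expect it to be the delicate point of the argument.

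\textbf{Conclusion.} Collecting the three bounds gives the quadratic inequality $\underline{\lambda}F^2\le G_{\mathcal{D}}(\bar u,v_{\mathcal{D}})^+ +(W_{\mathcal{D}}(\Lambda\nabla\bar u)+\overline{\lambda}S_{\mathcal{D}}(\bar u,v_{\mathcal{D}}))F$. Absorbing the linear term into $\frac{\underline{\lambda}}{2}F^2$ by Young's inequality and then using $\sqrt{p+q}\le\sqrt p+\sqrt q$ produces $F\le\sqrt{\tfrac{2}{\underline{\lambda}}G_{\mathcal{D}}(\bar u,v_{\mathcal{D}})^+}+\tfrac{1}{\underline{\lambda}}(W_{\mathcal{D}}(\Lambda\nabla\bar u)+\overline{\lambda}S_{\mathcal{D}}(\bar u,v_{\mathcal{D}}))$. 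Estimate \eqref{errorseepage1} then follows from $\|\nabla_{\mathcal{D}} u-\nabla\bar u\|\le F+S_{\mathcal{D}}(\bar u,v_{\mathcal{D}})$, and \eqref{errorseepage2} from $\|\Pi_{\mathcal{D}} u-\bar u\|\le\|\Pi_{\mathcal{D}} w\|+\|\Pi_{\mathcal{D}} v_{\mathcal{D}}-\bar u\|\le C_{\mathcal{D}}F+S_{\mathcal{D}}(\bar u,v_{\mathcal{D}})$, where the bound $\|\Pi_{\mathcal{D}} w\|\le C_{\mathcal{D}}\|\nabla_{\mathcal{D}} w\|$ comes from \eqref{coercivityseepage}. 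Since $v_{\mathcal{D}}\in\mathcal{K}_{\mathcal{D}}$ was arbitrary, both inequalities hold for every such test function.
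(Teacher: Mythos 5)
Your proposal is correct and follows essentially the same route as the paper: Stampacchia for well-posedness, the limit-conformity functional applied to $\bpsi=\Lambda\nabla\bar u$ (using $-\div(\Lambda\nabla\bar u)=f$ from interior test functions), the lifting of $\mathbb{T}_\disc u$ to an element of $\mathcal K$ via surjectivity of the trace to show $\langle\gamma_\bfn(\Lambda\nabla\bar u),\gamma\bar u-\mathbb{T}_\disc u\rangle\le 0$, and the quadratic inequality in $\|\nabla_\disc(u-v_\disc)\|$ closed by Young's inequality and $\sqrt{p+q}\le\sqrt p+\sqrt q$. The only difference is cosmetic bookkeeping in how the bilinear form is split, and all signs and constants check out.
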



\subsubsection{Obstacle problem}

An error estimate for gradient schemes for the obstacle problem is already given
in \cite[Theorem 1]{Y}. For low-order methods with piecewise constant
approximations, such as the HMM schemes, this theorem provides an $\mathcal O(\sqrt{h})$ rate
of convergence for the function and the gradient
($h$ is the mesh size). The following theorem improves \cite[Theorem 1]{Y}
by introducing the free choice of interpolant $v_\disc$. This enables us, in Section \ref{improveobs}, to
establish much better rates of convergence --
namely $\mathcal O(h)$ for HMM, for example.

\begin{theorem}[Error estimate for the obstacle problem] Under Hypothesis \ref{hyobs}, let $\bar{u} \in \mathcal{K}$ be the solution to Problem (\ref{weobs}). If $\mathcal{D}$ is a gradient discretisation in the sense of Definition \ref{gdobs} and if $\mathcal{K}_{\mathcal{D}}$ is non-empty, then there exists a unique solution $u \in \mathcal{K}_{\mathcal{D}}$ to the gradient scheme (\ref{gsobs}). Moreover, if $\mathrm{div}(\Lambda\nabla\bar{u})\in L^{2}(\Omega)$
then this solution satisfies the following estimates, for any $v_\disc\in \mathcal K_\disc$:
\begin{multline}
\|\nabla_{\mathcal{D}}u - \nabla\bar{u}\|_{L^{2}(\Omega)^{d}} \leq\\
\sqrt{\frac{2}{\underline{\lambda}}E_{\mathcal{D}}(\bar{u},v_{\mathcal{D}})^{+}}
+
\frac{1}{\underline{\lambda}}
[W_{\mathcal{D}}(\Lambda\nabla\bar{u})+(\overline{\lambda}+\underline{\lambda})
S_{\mathcal{D}}(\bar{u}, v_{\mathcal{D}})]
,
\label{errorobs1}
\end{multline}
\begin{multline}
\|\Pi_{\mathcal{D}}u - \bar{u}\|_{L^{2}(\Omega)} \leq\\
C_{\mathcal{D}}\sqrt{\frac{2}{\underline{\lambda}}E_{\mathcal{D}}(\bar{u},v_{\mathcal{D}})^+}+
\frac{1}{\underline{\lambda}}
[{C_\disc}W_{\mathcal{D}}(\Lambda\nabla\bar{u})+
(C_{\mathcal{D}}\overline{\lambda}+\underline{\lambda})S_{\mathcal{D}}(\bar{u},v_{\mathcal{D}})],
\label{errorobs2}\end{multline}
where 
\begin{equation}
E_{\mathcal{D}}(\bar
u,v_{\mathcal{D}})=\int_{\Omega}(\mathrm{div}(\Lambda\nabla\bar{u})+f)(\bar{u}-\Pi_{\mathcal{D}}v_{\mathcal{D}})\ud x \quad
\mbox{ and } \quad E_{\disc}^{+}= {\rm max}(0,E_{\disc}). 
\end{equation}
Here, the quantities $C_{\mathcal{D}}$, $S_{\mathcal{D}}(\bar{u}, v_{\mathcal{D}})$ and $W_{\mathcal{D}}$ are defined by \eqref{coercivityobs}, \eqref{consistencyobs} and \eqref{conformityobs}. 
\label{errorobs}\end{theorem}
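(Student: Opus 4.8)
The plan is to follow the gradient-scheme technique of \cite{B2} developed for linear diffusion PDEs, adapted to the variational-inequality setting, the one genuinely new ingredient being a sign argument that exploits the obstacle complementarity conditions. For existence and uniqueness, I would first observe that the bilinear form $a_\disc(w,w')=\int_\O\Lambda\nabla_\disc w\cdot\nabla_\disc w'\ud x$ is symmetric, continuous (bounded by $\overline{\lambda}\|\nabla_\disc w\|_{L^2(\O)^d}\|\nabla_\disc w'\|_{L^2(\O)^d}$) and coercive on $X_{\disc,0}$, since $\Lambda(x)$ has eigenvalues in $[\underline{\lambda},\overline{\lambda}]$ and $\|\nabla_\disc\cdot\|_{L^2(\O)^d}$ is a norm by Definition \ref{gdobs}. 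The set $\mathcal K_\disc=\{v\in X_{\disc,0}:\Pi_\disc v\le g\}$ is a nonempty (by assumption), closed, convex subset of the finite-dimensional space $X_{\disc,0}$. Minimising the strictly convex coercive energy $J(v)=\frac{1}{2}a_\disc(v,v)-\int_\O f\Pi_\disc v\ud x$ over $\mathcal K_\disc$ (equivalently, invoking the Lions--Stampacchia theorem) yields a unique minimiser, which is precisely the solution of \eqref{gsobs}.

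For the error estimates, the key decision is to estimate the reconstructed-gradient difference $B:=\|\nabla_\disc(u-v_\disc)\|_{L^2(\O)^d}$ rather than $\|\nabla_\disc u-\nabla\bar u\|_{L^2(\O)^d}$ directly; this keeps a common factor $B$ in every error contribution, avoids spurious cross terms, and is what delivers the clean constants. Writing $\nabla_\disc(u-v_\disc)=(\nabla_\disc u-\nabla\bar u)-(\nabla_\disc v_\disc-\nabla\bar u)$ and using coercivity of $\Lambda$, I would expand $\underline{\lambda}B^2\le\int_\O\Lambda\nabla_\disc(u-v_\disc)\cdot\nabla_\disc(u-v_\disc)\ud x$ into two pieces. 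On one piece I apply the discrete inequality \eqref{gsobs} with the admissible test function $v_\disc$ to replace $\int_\O\Lambda\nabla_\disc u\cdot\nabla_\disc(u-v_\disc)\ud x$ by $\int_\O f\Pi_\disc(u-v_\disc)\ud x$; on the remaining term $-\int_\O\Lambda\nabla\bar u\cdot\nabla_\disc(u-v_\disc)\ud x$ I apply the limit-conformity measure $W_\disc$ to $\bpsi=\Lambda\nabla\bar u$, which lies in $H_{\div}(\O)$ precisely because of the added hypothesis $\div(\Lambda\nabla\bar u)\in L^2(\O)$, replacing it by $\int_\O\div(\Lambda\nabla\bar u)\,\Pi_\disc(u-v_\disc)\ud x$ up to an error of size $W_\disc(\Lambda\nabla\bar u)\,B$. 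The leftover contribution involving $\nabla_\disc v_\disc-\nabla\bar u$ is controlled by Cauchy--Schwarz together with the consistency bound $\|\nabla_\disc v_\disc-\nabla\bar u\|_{L^2(\O)^d}\le S_\disc(\bar u,v_\disc)$.

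The hard part, and the genuinely inequality-specific step, is the sign argument. These manipulations collect the integral $\int_\O(f+\div(\Lambda\nabla\bar u))\,\Pi_\disc(u-v_\disc)\ud x$, and I would show it is at most $E_\disc(\bar u,v_\disc)$. Splitting it as $E_\disc(\bar u,v_\disc)+\int_\O(f+\div(\Lambda\nabla\bar u))(\Pi_\disc u-\bar u)\ud x$, I argue the second integral is nonpositive: by \eqref{obs2} the factor $f+\div(\Lambda\nabla\bar u)$ is nonnegative, the complementarity \eqref{obs1} gives $(f+\div(\Lambda\nabla\bar u))\bar u=(f+\div(\Lambda\nabla\bar u))g$ almost everywhere, and $u\in\mathcal K_\disc$ forces $\Pi_\disc u-g\le 0$; hence the integrand equals $(f+\div(\Lambda\nabla\bar u))(\Pi_\disc u-g)\le 0$. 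This is exactly where the extra hypothesis $\div(\Lambda\nabla\bar u)\in L^2(\O)$ is needed so that the complementarity structure can be tested against $\Pi_\disc u$.

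Collecting everything yields the scalar quadratic inequality $\underline{\lambda}B^2\le E_\disc^+ +\big(W_\disc(\Lambda\nabla\bar u)+\overline{\lambda}\,S_\disc(\bar u,v_\disc)\big)B$. Resolving it (using $\sqrt{x+y}\le\sqrt{x}+\sqrt{y}$) bounds $B$ by $\frac{1}{\underline{\lambda}}\big(W_\disc(\Lambda\nabla\bar u)+\overline{\lambda}\,S_\disc(\bar u,v_\disc)\big)+\sqrt{E_\disc^+/\underline{\lambda}}$, and then the triangle inequality $\|\nabla_\disc u-\nabla\bar u\|_{L^2(\O)^d}\le B+S_\disc(\bar u,v_\disc)$ turns the coefficient $\overline{\lambda}$ into $\overline{\lambda}+\underline{\lambda}$ and produces \eqref{errorobs1} (the factor $\sqrt{2}$ in the statement being a harmless weakening). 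Finally, \eqref{errorobs2} follows by writing $\|\Pi_\disc u-\bar u\|_{L^2(\O)}\le\|\Pi_\disc(u-v_\disc)\|_{L^2(\O)}+\|\Pi_\disc v_\disc-\bar u\|_{L^2(\O)}$, bounding the first term by $C_\disc B$ through the coercivity constant \eqref{coercivityobs} and the second by $S_\disc(\bar u,v_\disc)$, and inserting the bound on $B$.
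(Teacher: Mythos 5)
Your proposal is correct and follows essentially the same route as the paper: the same use of $W_{\mathcal{D}}(\Lambda\nabla\bar u)$ with the test element $u-v_{\mathcal{D}}$, the same sign argument splitting $\int_\Omega(f+\mathrm{div}(\Lambda\nabla\bar u))\Pi_{\mathcal{D}}(u-v_{\mathcal{D}})\ud x$ via $\Pi_{\mathcal{D}}u\le g$, \eqref{obs2} and the complementarity \eqref{obs1} to isolate $E_{\mathcal{D}}(\bar u,v_{\mathcal{D}})$, and the same final quadratic inequality in $\|\nabla_{\mathcal{D}}(u-v_{\mathcal{D}})\|_{L^2(\Omega)^d}$ (the paper resolves it by Young's inequality, you by the quadratic formula, which is why you get a marginally sharper constant than the stated $\sqrt{2}$). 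The only point you gloss over, which the paper addresses explicitly, is that the weak solution of \eqref{weobs} satisfies \eqref{obs1}--\eqref{obs2} almost everywhere under the hypothesis $\mathrm{div}(\Lambda\nabla\bar u)\in L^2(\Omega)$; this is standard and does not affect the validity of your argument.
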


\begin{remark} We note that, if $\Lambda$ is Lipschitz-continuous, the assumption $\div(\Lambda\nabla\bar{u}) \in L^{2}(\Omega)$ is reasonable given the $H^{2}$-regularity result on $\bar u$
of \cite{A2-23-4}.
\end{remark}

Compared with the previous general estimates, such as Falk's Theorem \cite{F1}, our estimates seem to be 
simpler since it only involves one choice of interpolant  $v_\disc$ while Falk's estimate depends on  
choices of $v_{h} \in K_{h}$ and $v \in K$. Yet, our estimates
provide the same final orders of convergence as Falk's estimate.
We also note that Estimates \eqref{errorseepage1}, \eqref{errorseepage2}, \eqref{errorobs1} and \eqref{errorobs2} are applicable to conforming and non-conforming methods, whereas
the estimates in \cite{F1} seem to be applicable only to conforming methods.


\subsection{Orders of convergence}

In what follows, we consider polytopal open sets $\Omega$ and gradient discretisations based on
polytopal meshes of $\Omega$, as defined in \cite{B10,S1}.
The following definition is a simplification (it does not include
the vertices) of the definition in \cite{B10}, that will however
be sufficient to our purpose.

\begin{definition}[Polytopal mesh]\label{def:polymesh}~
Let $\Omega$ be a bounded polytopal open subset of $\RR^d$ ($d\ge 1$). 
A polytopal mesh of $\O$ is given by $\polyd = (\mesh,\edges,\centers)$, where:
\begin{enumerate}
\item $\mesh$ is a finite family of non empty connected polytopal open disjoint subsets of $\O$ (the cells) such that $\overline{\O}= \dsp{\cup_{K \in \mesh} \overline{K}}$.
For any $K\in\mesh$, $|K|>0$ is the measure of $K$ and $h_K$ denotes the diameter of $K$.

\item $\edges$ is a finite family of disjoint subsets of $\overline{\O}$ (the edges of the mesh in 2D,
the faces in 3D), such that any $\edge\in\edges$ is a non empty open subset of a hyperplane of $\RR^d$ and $\edge\subset \overline{\O}$.
We assume that for all $K \in \mesh$ there exists  a subset $\edgescv$ of $\edges$
such that $\dr K  = \dsp{\cup_{\edge \in \edgescv}} \overline{\edge}$. 
We then denote by $\mesh_\edge = \{K\in\mesh\,:\,\edge\in\edgescv\}$.
We then assume that, for all $\edge\in\edges$, $\mesh_\edge$ has exactly one element
and $\edge\subset\partial\O$, or $\mesh_\edge$ has two elements and
$\edge\subset\O$. 
We let $\edgesint$ be the set of all interior faces, i.e. $\edge\in\edges$ such that $\edge\subset \O$, and $\edgesext$ the set of boundary
faces, i.e. $\edge\in\edges$ such that $\edge\subset \dr\O$.
For $\edge\in\edges$, the $(d-1)$-dimensional measure of $\edge$ is $|\edge|$,
the centre of gravity of $\edge$ is $\centeredge$, and the diameter of $\edge$ is $h_\edge$.

\item $\centers = (x_K)_{K \in \mesh}$ is a family of points of $\O$ indexed by $\mesh$ and such that, for all  $K\in\mesh$,  $\xcv\in K$ ($\xcv$ is sometimes called the ``centre'' of $\cv$). 
We then assume that all cells $K\in\mesh$ are  strictly $\xcv$-star-shaped, meaning that 
if $x$ is in the closure of $K$ then the line segment $[\xcv,x)$ is included in $K$.

\end{enumerate}
For a given $K\in \mesh$, let $\bfn_{K,\sigma}$ be the unit vector normal to $\sigma$ outward to $K$
and denote by $d_{K,\sigma}$ the orthogonal distance between $x_K$ and $\sigma\in\mathcal E_K$.
The size of the discretisation is $h_\mesh=\sup\{h_K\,:\; K\in \mesh\}$.
\end{definition}

For most gradient discretisations for PDEs based on first order methods, including HMM methods and conforming or non-conforming finite elements methods, explicit estimates on $S_\disc$ and $W_\disc$ can be established \cite{S1}. The proofs of these estimates are easily transferable to the above setting of gradient discretisations for variational inequalities, and give
\begin{equation}
W_{\mathcal{D}}(\bpsi) \leq Ch_{\mathcal M} \| \bpsi \|_{H^{1}(\Omega)^{d}}, \quad \forall \bpsi \in H^{1}(\Omega)^{d},
\end{equation}
\begin{equation}
{\rm{inf}}_{v_{\mathcal{D}}\in \mathcal{K}_{\mathcal{D}}}S_{\mathcal{D}}(\varphi,v_{\mathcal{D}}) \leq Ch_{\mathcal M} \| \varphi \|_{H^{2}(\Omega)}, \quad \forall \varphi \in H^{2}(\Omega) \cap \mathcal K.
\end{equation}
Hence, if $\Lambda$ is Lipschitz-continuous and $\bar u\in H^2(\O)$, then the terms involving $W_\disc$ and $S_\disc$ in \eqref{errorseepage1}, \eqref{errorseepage2}, \eqref{errorobs1} 
and \eqref{errorobs2} are of order
$\mathcal O(h_{\mathcal M})$, provided that $v_{\mathcal D}$ is chosen to optimise $S_{\disc}$.
Since the terms $G_{\mathcal D}$ and $E_{\mathcal D}$ can be bounded
above by $C||\gamma(\bar u)-
\mathbb{T}_\disc v_\disc||_{L^2(\partial\O)}$ and $C||\bar u-\Pi_\disc v_\disc||_{L^2(\O)}$, respectively, we expect these to be
of order $h_{\mathcal M}$. Hence, the dominating terms in the error estimates are
$\sqrt{G_\disc^+}$ and $\sqrt{E_\disc^+}$. In first approximation, these terms seem to behave
as $\sqrt{h_{\mathcal M}}$ for a first order conforming or non-conforming method. This initial brute estimate
can however usually be improved, as we will show in Theorems \ref{theorem:improveseepage} and
\ref{theorem:improveobse}, even for non-conforming methods based on piecewise constant
reconstructed functions, and lead to the expected $\mathcal O(h_{\mathcal M})$ global convergence rate.


\subsubsection{Signorini problem} 
\label{sec:improvesig}

For a first order conforming numerical method, the $\mathbb{P}_1$ finite element method for example, 
if $\bar u$ is in $H^2(\O)$, then the classical interpolant $v_\disc\in X_{\disc,\Gamma_1}$ constructed from the values of $\bar u$ at the vertices satisfies \cite{C3}
\begin{align*}
\| \Pi_{\mathcal{D}}v_{\mathcal{D}} - \bar{u} \|_{L^{2}(\Omega)} &\leq C h_{\mathcal M}^{2} \| D^{2}\bar{u}\|_{L^{2}(\Omega)^{d\times d}},\\
\| \nabla_{\mathcal{D}}v_{\mathcal{D}} - \nabla\bar{u} \|_{L^{2}(\Omega)^d} &\leq Ch_{\mathcal M} \| D^{2}\bar{u}\|_{L^{2}(\Omega)^{d\times d}},\\
||\mathbb{T}_\disc v_\disc-\gamma(\bar u)
||_{L^2(\partial\O)}&\leq Ch_{\mathcal M}^2 \| D^{2}\gamma(\bar{u})\|_{L^{2}(\partial\Omega)^{d\times d}}.
\end{align*}
If $a$ is piecewise affine on the mesh then this interpolant $v_\disc$ lies in $\mathcal K_\disc$ and can therefore be used
in Theorem \ref{errorseepage}. We then have $S_{\mathcal{D}}(\bar{u},v_{\mathcal{D}})= \mathcal{O}(h_\mesh)$ and $G_{\mathcal{D}}(\bar{u},v_{\mathcal{D}})= \mathcal{O}(h_\mesh^{2})$, and \eqref{errorseepage1}--\eqref{errorseepage2} give an order one error estimate on
the $H^1$ norm. If $a$ is not linear, the definition of $\mathcal K_\disc$ is usually relaxed, see Section \ref{sec:approxbarriers}.

A number of low-order methods have piecewise constant approximations of the solution,
e.g. finite volume methods or finite element methods with mass lumping. For those,
there is no hope of finding an interpolant which gives an order $h_{\mathcal M}^2$ approximation
of $\bar u$ in $L^2(\O)$ norm. We can however prove, for such methods, that $G_\disc$ behaves better than the expected $\sqrt{h_\mesh}$ order.
In the following theorem, we denote by $W^{2,\infty}(\partial\O)$ the functions
$v$ on $\partial\O$ such that, for any face $F$ of $\partial\O$, $v_{|F}\in W^{2,\infty}(F)$.

\begin{theorem}[Signorini problem: order of convergence for non-conforming reconstructions]
Under Hypothesis \ref{hyseepage}, let $\disc$ be a gradient discretisation in the sense of
Definition \ref{gdseepage}, such that $\mathcal K_\disc\not=\emptyset$, and let $\polyd=(\mesh,\edges,\centers)$ be a polytopal mesh of $\O$.
Let $\bar{u}$ and $u$ be the respective solutions to Problems \eqref{weseepage} and \eqref{gsseepage}.
We assume that the barrier $a$ is constant, that $\gamma_\bfn(\Lambda\nabla\bar u)\in L^2(\partial\O)$ and that $\gamma(\bar u)\in W^{2,\infty}(\partial\O)$.
We also assume that there exists an
interpolant $v_{\mathcal{D}} \in \mathcal{K}_{\mathcal{D}}$ such that $S_{\mathcal{D}}(\bar{u},v_{\mathcal{D}})\le C_1h_{\mathcal M}$
and that, for any $\sigma\in\edgesext$, $||\mathbb{T}_{\mathcal{D}}v_{\mathcal{D}}-\gamma(\bar{u})(x_{\sigma})||_{L^2(\edge)}\le C_2 h_\sigma^2|\edge|^{1/2}$, where $x_{\sigma}\in\sigma$ (here, the constants $C_i$ do not depend on the edge or the mesh). Then, there exists $C$ depending only on 
$\O$, $\Lambda$, $\bar u$, $a$, $C_1$, $C_2$ and an upper bound of $C_\disc$ such that
\begin{equation}
\|\nabla_{\mathcal{D}}u - \nabla\bar{u}\|_{L^{2}(\Omega)^{d}}
+ \|\Pi_{\mathcal{D}}u - \bar{u}\|_{L^{2}(\Omega)}
\leq Ch_{\mathcal M}+CW_{\disc}(\Lambda\nabla\bar u).
\label{eq:improverrorseepage}\end{equation}
\label{theorem:improveseepage}
\end{theorem}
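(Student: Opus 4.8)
The plan is to show that the boundary term $G_\disc(\bar u, v_\disc)$ — which controls the dominant $\sqrt{G_\disc^+}$ contributions in Theorem~\ref{errorseepage} — is in fact $\mathcal O(h_\mesh^2)$ rather than merely $\mathcal O(h_\mesh)$. Once this is established, substituting $G_\disc^+\le Ch_\mesh^2$ and $S_\disc(\bar u,v_\disc)\le C_1 h_\mesh$ into \eqref{errorseepage1}--\eqref{errorseepage2}, and using that $C_\disc$ is bounded above, immediately yields \eqref{eq:improverrorseepage}. So the entire difficulty is concentrated in the estimate on $G_\disc$.

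First I would use that $\gamma_\bfn(\Lambda\nabla\bar u)\in L^2(\partial\O)$ to turn the duality product defining $G_\disc$ into an integral $\int_{\partial\O}\gamma_\bfn(\Lambda\nabla\bar u)\,(\mathbb T_\disc v_\disc - \gamma\bar u)\ud s$, and then discard $\Gamma_1$ and $\Gamma_2$: on $\Gamma_1$ both $\gamma\bar u$ and $\mathbb T_\disc v_\disc$ vanish (the latter because $\mathbb T_\disc$ takes values in $H^{1/2}_{\Gamma_1}$), while on $\Gamma_2$ the Neumann condition gives $\gamma_\bfn(\Lambda\nabla\bar u)=0$. Thus $G_\disc$ reduces to an integral over $\Gamma_3$. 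There I would exploit the Signorini complementarity conditions: writing $g:=\gamma_\bfn(\Lambda\nabla\bar u)$, one has $g\le 0$ and $g(a-\bar u)=0$ a.e., so $g$ vanishes off the contact set $\mathcal C:=\{x\in\Gamma_3:\gamma\bar u(x)=a\}$, and $\gamma\bar u=a$ on $\mathcal C$. Hence $G_\disc=\int_{\mathcal C}g\,(\mathbb T_\disc v_\disc-a)\ud s$, so that only edges $\edge\subset\Gamma_3$ with $|\edge\cap\mathcal C|>0$ contribute.

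The crux is the edgewise quadratic estimate. Because $a$ is constant and $\gamma\bar u\in W^{2,\infty}(\partial\O)$ with $\gamma\bar u\le a$, at any contact point $x_0^\edge\in\edge\cap\mathcal C$ the $C^{1,1}$ function $\gamma\bar u$ attains its maximum $a$, so its tangential gradient vanishes there; a second-order Taylor expansion then gives $0\le a-\gamma\bar u(x)\le C h_\edge^2$ for every $x\in\edge$, with $C$ depending on $\|D^2\gamma\bar u\|_{L^\infty}$. Combined with the hypothesis $\|\mathbb T_\disc v_\disc-\gamma(\bar u)(x_\edge)\|_{L^2(\edge)}\le C_2 h_\edge^2|\edge|^{1/2}$ and with $a-\gamma(\bar u)(x_\edge)\le Ch_\edge^2$ (again from Taylor), this yields $\|\mathbb T_\disc v_\disc-\gamma\bar u\|_{L^2(\edge)}\le C' h_\edge^2|\edge|^{1/2}$ on each contributing edge. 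I would then estimate, by Cauchy--Schwarz on each edge and then over all edges,
\[
G_\disc\le\sum_{\edge\subset\Gamma_3}\|g\|_{L^2(\edge)}\,C'h_\edge^2|\edge|^{1/2}
\le C'h_\mesh^2\Big(\sum_\edge\|g\|_{L^2(\edge)}^2\Big)^{1/2}\Big(\sum_\edge|\edge|\Big)^{1/2}
\le C'h_\mesh^2\,|\partial\O|^{1/2}\,\|g\|_{L^2(\partial\O)},
\]
which gives the desired bound $G_\disc^+\le Ch_\mesh^2$, hence $\sqrt{G_\disc^+}\le C h_\mesh$.

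The main obstacle is the quadratic touching estimate $a-\gamma\bar u\le Ch_\edge^2$: this is precisely where the assumptions that $a$ is constant and $\gamma\bar u\in W^{2,\infty}(\partial\O)$ are essential, since for a non-constant barrier the tangential gradient of $\gamma\bar u$ at a contact point need not vanish and one would recover only a first-order bound. A minor technical point to address is that each mesh edge $\edge\in\edgesext$ must lie within a single face of $\partial\O$ and within a single $\Gamma_i$, so that the face-by-face $W^{2,\infty}$ regularity can be invoked and the reduction to $\Gamma_3$ is clean; I would record this compatibility of the mesh with the data as part of the setup.
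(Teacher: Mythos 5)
Your proposal is correct and follows essentially the same route as the paper's proof: reduction of the duality product to an integral over $\Gamma_3$, elimination of the $(a-\gamma\bar u)\gamma_\bfn(\Lambda\nabla\bar u)$ term via the complementarity conditions, an edge-by-edge case split according to whether the contact set meets the edge in positive measure, a quadratic Taylor estimate of $a-\gamma\bar u$ around a contact point where the tangential gradient vanishes, and a final Cauchy--Schwarz summation. The only (immaterial) difference is that you derive the vanishing of the tangential gradient at the contact point from the first-order condition at an interior maximum of $\gamma\bar u$ (using that $a$ is constant), whereas the paper invokes Stampacchia's lemma on the zero set of $\gamma\bar u-a$; both yield the same point $y_0$ with $(\gamma\bar u-a)(y_0)=0$ and $\nabla_\edge(\gamma\bar u-a)(y_0)=0$.
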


\begin{remark}\label{rem:makesense}
Since $\gamma_\bfn(\Lambda\nabla\bar u) \in L^2(\partial\O)$ the reconstructed
trace $\mathbb{T}_\disc$ can be taken with values in $L^2(\partial\Omega)$
(see the discussion at the end of Section \ref{sec:signorini_problem}).
In particular, $\mathbb{T}_\disc v$ can be
piecewise constant. The $H^{1/2}(\partial\Omega)$ norm in the definition
\eqref{coercivityseepage} of $C_\disc$
is then replaced with the $L^2(\partial\O)$ norm, and the duality product in \eqref{conformityseepage}
is replaced with a plain integral on $\partial\O$.
\end{remark}

For the two-point finite volume method,
an error estimate similar to \eqref{eq:improverrorseepage} is stated in \cite{A2}, under the assumption that the solution is in $H^2(\O)$. It however seems
to us that the proof in \cite{A2} uses an estimate which requires $\gamma(\bar u)\in W^{2,\infty}(\dr\O)$, as in Theorem \ref{theorem:improveseepage}.

We notice that the assumption that $a$ is constant is compatible with some models, such as an electrochemical reaction and friction mechanics
\cite{A2-17}. This condition on $a$ can
be relaxed or, as detailed in Section \ref{sec:approxbarriers}, $a$ can be approximated by a simpler barrier -- the definition of which depends on the considered scheme.


\subsubsection{Obstacle problem}
\label{improveobs}

As explained in the introduction of this section,
to estimate the order of convergence for the obstacle problem we only need to estimate $E_{\mathcal{D}}(\bar{u},v_{\mathcal{D}})$.
This can be readily done for $\mathbb{P}_1$ finite elements, for example. If $g$ is linear or constant, letting $v_{\disc} = \bar u$ at all vertices  (as in \cite{finite-element}) shows that $v_{\disc}$ is an element of $\mathcal K_{\disc}$, and that $S_{\disc}(\bar u, v_{\disc})= \mathcal O (h_\mesh)$ and $E_{\disc}(\bar u, v_{\disc})= \mathcal O (h_\mesh^{2})$. Therefore, Theorem \ref{errorobs} provides an order one error estimate.

The following theorem shows that, as for the Signorini problem, the error estimate for
the obstacle problem is of order one even for methods with piecewise constant
reconstructed functions.

\begin{theorem}[Obstacle problem: order of convergence for non-conforming reconstructions]
Under Hypothesis \ref{hyobs}, let $\disc$ be a gradient discretisation in the sense of Definition \ref{gdobs}, such
that $\mathcal K_\disc\not=\emptyset$, and let $\polyd=(\mesh,\edges,\centers)$ be a polytopal mesh of $\O$.
Let $\bar{u}$ and $u$ be the respective solutions to Problems \eqref{weobs} and \eqref{gsobs}.
We assume that $g$ is constant, that $\div(\Lambda\nabla\bar u)\in L^2(\O)$, and that $\bar u\in W^{2,\infty}(\O)$.
We also assume that there exists an interpolant $v_{\mathcal{D}} \in \mathcal{K}_{\mathcal{D}}$ such that $S_{\mathcal{D}}(\bar{u},v_{\mathcal{D}})\le C_1h_{\mathcal M}$ and, for any $K\in\mesh$,
$||\Pi_{\mathcal{D}}v_{\mathcal{D}}-\bar{u}(x_{K})||_{L^2(K)}\le C_2h_K^2|K|^{1/2}$
(here, the various constants $C_i$ do not depend on the cell or the mesh). Then, there exists $C$ depending only on $\O$, $\underline{\lambda}$, $\overline{\lambda}$, $C_1$, $C_2$ and an upper bound of $C_\disc$ such that
\begin{equation}
\|\nabla_{\mathcal{D}}u - \nabla\bar{u}\|_{L^{2}(\Omega)^{d}}
+ \|\Pi_{\mathcal{D}}u - \bar{u}\|_{L^{2}(\Omega)}
\leq Ch_{\mathcal M}{+CW_{\disc}(\Lambda\nabla\bar u).}
\label{eq:improverrorobs}\end{equation}
\label{theorem:improveobse}
\end{theorem}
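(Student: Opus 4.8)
The plan is to feed the general estimate of Theorem~\ref{errorobs} with the specific interpolant provided by the hypotheses, and to show that the obstacle term $E_\disc(\bar u,v_\disc)$ is in fact $\mathcal O(h_\mesh^2)$ rather than the naively expected $\mathcal O(h_\mesh)$; this turns the dominating contribution $\sqrt{(2/\underline\lambda)E_\disc^+}$ into an $\mathcal O(h_\mesh)$ term and gives \eqref{eq:improverrorobs}. Write $\mu=\div(\Lambda\nabla\bar u)+f$, which lies in $L^2(\O)$ by assumption. By \eqref{obs2}, $\mu\ge0$ a.e.\ in $\O$, and by the complementarity relation \eqref{obs1}, $\mu(g-\bar u)=0$ a.e. Since $v_\disc\in\mathcal K_\disc$ forces $\Pi_\disc v_\disc\le g$, and since $\int_\O\mu(\bar u-g)\ud x=0$, I first rewrite
\[
E_\disc(\bar u,v_\disc)=\int_\O\mu\,(\bar u-\Pi_\disc v_\disc)\ud x=\int_\O\mu\,(g-\Pi_\disc v_\disc)\ud x .
\]
Both factors are now nonnegative, so $E_\disc\ge0$ and $E_\disc^+=E_\disc$; it remains to bound this single integral.

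The key is an elementwise $\mathcal O(h_K^2)$ bound on $g-\Pi_\disc v_\disc$ on those cells that carry mass of $\mu$. Fix $K\in\mesh$. If $\mu=0$ a.e.\ on $K$ the contribution vanishes. Otherwise $\mu\neq0$ on a subset of $K$ of positive measure, on which $\bar u=g$ by complementarity; hence $\bar u$ equals the constant $g$ on a positive-measure set $A\subset K$, and therefore $\nabla\bar u=0$ a.e.\ on $A$ (the gradient of a Sobolev function vanishes a.e.\ on a set where the function is a.e.\ constant --- equivalently, contact points are interior global maxima of $\bar u\le g$). Choosing $y\in A$ with $\bar u(y)=g$ and $\nabla\bar u(y)=0$, a Taylor expansion of $\bar u\in W^{2,\infty}(\O)$ along the segment $[y,x_K]\subset K$ (admissible since $K$ is strictly $x_K$-star-shaped) yields
\[
|g-\bar u(x_K)|=|\bar u(y)-\bar u(x_K)|\le\tfrac12\|D^2\bar u\|_{L^\infty(\O)}\,h_K^2 .
\]
Combined with the interpolation hypothesis $\|\Pi_\disc v_\disc-\bar u(x_K)\|_{L^2(K)}\le C_2h_K^2|K|^{1/2}$ and the fact that $g-\bar u(x_K)$ is constant on $K$, this gives $\|g-\Pi_\disc v_\disc\|_{L^2(K)}\le Ch_K^2|K|^{1/2}$.

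Summing over cells with Cauchy--Schwarz then produces the quadratic bound
\[
E_\disc=\sum_{K\in\mesh}\int_K\mu\,(g-\Pi_\disc v_\disc)\ud x\le\sum_{K\in\mesh}\|\mu\|_{L^2(K)}\,Ch_K^2|K|^{1/2}\le Ch_\mesh^2\,\|\mu\|_{L^2(\O)}\,|\O|^{1/2}.
\]
Thus $E_\disc^+\le Ch_\mesh^2$, so $\sqrt{(2/\underline\lambda)E_\disc^+}\le Ch_\mesh$. Inserting this, together with the hypothesis $S_\disc(\bar u,v_\disc)\le C_1h_\mesh$ and an upper bound on $C_\disc$, into \eqref{errorobs1} and \eqref{errorobs2}, and adding the two resulting inequalities, gives exactly \eqref{eq:improverrorobs}, with $C$ additionally absorbing $\|D^2\bar u\|_{L^\infty(\O)}$ and $\|\mu\|_{L^2(\O)}$.

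The main obstacle is the elementwise upgrade from first to second order: the naive Lipschitz bound $|g-\bar u(x_K)|\le\|\nabla\bar u\|_{L^\infty}h_K$ would only give $E_\disc=\mathcal O(h_\mesh)$ and hence the unwanted $\sqrt{h_\mesh}$ rate. The gain to $h_K^2$ rests entirely on the vanishing of $\nabla\bar u$ at a contact point inside $K$, which is where the hypotheses \emph{$g$ constant} and $\bar u\in W^{2,\infty}(\O)$ are genuinely used; the measure-theoretic justification that $\nabla\bar u=0$ a.e.\ on the contact subset of $K$ is the one point requiring care, while the remaining Taylor estimation and summation are routine.
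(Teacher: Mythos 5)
Your proposal is correct and follows essentially the same route as the paper: rewrite $E_{\mathcal{D}}(\bar u,v_{\mathcal{D}})=\int_\O(\div(\Lambda\nabla\bar u)+f)(g-\Pi_{\mathcal{D}} v_{\mathcal{D}})\ud x$ via the complementarity relation \eqref{obs1}, split cell by cell, and on contact cells use the vanishing of $\nabla\bar u$ at a contact point (Stampacchia) plus a Taylor expansion and the interpolation hypothesis to obtain $E_{\mathcal{D}}=\mathcal O(h_\mesh^2)$, then feed this into Theorem \ref{errorobs}. Your dichotomy ($\mu=0$ a.e.\ on $K$ versus not) is a slightly cleaner, exhaustive version of the paper's two cases, but the argument is the same.
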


As for the Signorini problem, under regularity assumptions the condition that $g$ is constant can be relaxed,
see Remark \ref{relax:gconst}. However, if $g$ is not constant it might be more practical to
use an approximation of this barrier in the numerical discretisation -- see Section \ref{sec:approxbarriers}.

\begin{remark}[Convergence without regularity assumptions]
The various regularity assumptions made on the data or the solutions in all previous theorems
are used to state optimal
orders of convergence. The convergence of gradient schemes can however
be established under the sole regularity assumptions stated in Hypothesis \ref{hyseepage} and \ref{hyobs}, see \cite{Y}.
\end{remark}



\section{Examples of gradient schemes}\label{sec:examples}

Many numerical schemes can be seen as gradient schemes. We show here
that two families of methods, namely conforming Galerkin methods and HMM methods,
can be recast as gradient schemes when applied to variational inequalities.


\subsection{Galerkin methods}

Any Galerkin method, including conforming finite elements and spectral methods of any order, fits into the gradient scheme framework.

For the Signorini problem, if $V$ is a finite dimensional subspace of $\{v\in H^1(\O)\,:\,\gamma(v)=0\mbox{ on $\Gamma_1$}\}$, we let 
$X_{\disc,\Gamma_1}=V$, $\Pi_\disc={\rm Id}$, $\nabla_\disc=\nabla$ and $\mathbb{T}_\disc=\gamma$. Then
$(X_{\disc,\Gamma_{1}},\Pi_\disc,\mathbb{T}_\disc,\nabla_\disc)$ is a gradient discretisation and the corresponding gradient scheme
corresponds to the Galerkin approximation of \eqref{weobs}. Then, $C_\disc$ defined by
\eqref{coercivityseepage} is bounded above by the maximum between the Poincar\'e constant
and the norm of the trace operator $\gamma:H^1(\O)\to H^{1/2}(\dr\O)$.

Conforming Galerkin methods for the obstacle problem consist in taking
$X_{\disc,0}=V$
a finite dimensional subspace of $H_0^1(\O)$,
$\Pi_\disc={\rm Id}$ and $\nabla_\disc=\nabla$. For this gradient discretisation,
$C_\disc$ (defined by \eqref{coercivityobs}) is bounded above by the Poincar\'e constant in $H^1_0(\O)$.

For both the Signorini and the obstacle problems, $W_\disc$ is identically zero and the error estimate
is solely dictated by the interpolation error $S_\disc(\bar u,v)$ and by $G_\disc(\bar u,v)$
or $E_\disc(\bar u,v)$, as expected.
For $\mathbb{P}_1$ finite elements, if the barrier ($a$ or $g$) is piecewise affine on the mesh then the classical
interpolant $v$ constructed from the nodal
values of $\bar u$ belongs to $\mathcal K$.
As we saw, using this interpolant in Estimates \eqref{errorobs1} and \eqref{errorobs2} leads to the
expected order 1 convergence.

If the barrier is more complex, then it is usual to consider some piecewise affine approximation of it
in the definition of the scheme, and the gradient scheme is then modified to take into account
this approximate barrier (see Section \ref{sec:approxbarriers}); the previous natural interpolant
of $\bar u$ then belongs to the (approximate) discrete set $\widetilde{\mathcal K}_\disc$
used to define the scheme, and gives
a proper estimate of $S_\disc(\bar u,v)$ and $G_\disc(\bar u,v)$ or $E_\disc(\bar u,v)$.


\subsection{Hybrid mimetic mixed methods (HMM)}\label{sec:hmm}
HMM is a family of methods, introduced in  \cite{B5}, that contains the hybrid finite volume methods \cite{sushi},
the (mixed-hybrid) mimetic finite differences methods \cite{bre-05-fam} and the mixed finite
volume methods \cite{dro-06-mix}. These methods were developed for anisotropic heterogeneous diffusion equations
on generic grids, as often encountered in engineering problems related to flows in porous media.
So far, HMM schemes have only be considered for PDEs.
The generic setting of gradient schemes enable us to develop and analyse HMM methods for variational
inequalities.

\begin{remark} The (conforming) mimetic methods studied in \cite{A-32} for obstacle problems
are different from the mixed-hybrid mimetic method of the HMM family; they are based on nodal unknowns rather than
cell and face unknowns. These conforming mimetic methods however also fit into the gradient
scheme framework \cite{B10}.  \end{remark}


\subsubsection{Signorini problem}\label{sec:HMM.sig}

Let $\polyd$ be a polytopal mesh of $\Omega$.
The discrete space to consider is
\begin{equation}\label{def.XDG}
\begin{aligned}
X_{\disc,\Gamma_{1}}=\{ &v=((v_{K})_{K\in \mathcal{M}}, (v_{\sigma})_{\sigma \in \mathcal{E}})\;:\; v_{K} \in \RR, v_{\sigma} \in \RR,\\
& v_{\sigma}=0 \; \mbox{for all}\; \sigma \in \edgesext\mbox{ such that }\sigma\subset \Gamma_{1} \}
\end{aligned}
\end{equation}
(we assume here that the mesh is compatible with $(\Gamma_i)_{i=1,2,3}$, in the sense that for any $i=1,2,3$,
each boundary edge is either fully included in $\Gamma_i$ or disjoint from this set).
The operators $\Pi_\disc$ and $\nabla_\disc$ are defined as follows:
\[
\forall v\in X_{\disc,\Gamma_1}\,,\;\forall K\in\mathcal M\,:\,\Pi_\disc v=v_K\mbox{ on $K$},
\]
\[
\forall v\in X_{\disc,\Gamma_1}\,,\;\forall K\in\mathcal M\,:\,\nabla_\disc v=\nabla_{K}v+
\frac{\sqrt{d}}{d_{K,\sigma}}(A_{K}R_{K}(v))_{\sigma}\mathbf{n}_{K,\sigma}\mbox{ on }{\rm co}(\{\sigma,x_K\})
\]
where ${\rm co}(S)$ is the convex hull of the set $S$ and
\begin{itemize}
\item $\nabla_{K}v= \frac{1}{|K|}\sum_{\sigma\in \edgescv}|\sigma|(v_{\sigma}-v_K)\mathbf{n}_{K,\sigma}$,
\item $R_{K}(v)=(v_{\sigma}-v_{K}-\nabla_{K}v\cdot (x_{\sigma}-x_{K}))_{\sigma\in\mathcal E_K}\in \RR^{{\rm Card}(\edgescv)}$,
\item $A_K$ is an isomorphism of the vector space ${\rm Im}(R_K)$.
\end{itemize}
It is natural to consider a piecewise constant trace reconstruction:
\begin{equation}\label{def:TD}
\forall \sigma\in\mathcal E_{\rm ext}\,:\,\mathbb{T}_\disc v = v_\sigma\mbox{ on $\sigma$}.
\end{equation}
This reconstructed trace operator does not take values in $H^{1/2}_{\Gamma_1}(\partial\O)$,
and the corresponding gradient discretisation $\disc$ is therefore not admissible
in the sense of Definition \ref{gdseepage}. However, for sufficiently regular $\bar u$,
we can consider reconstructed traces in $L^2(\partial\O)$ only, see Remark \ref{rem:makesense}.

It is proved in \cite{B1} that, if $\Gamma_1=\dr\O$, then $(X_{\disc,\Gamma_1},\Pi_\disc,\nabla_\disc)$
is a gradient discretisation for homogeneous Dirichlet
boundary conditions, whose gradient scheme gives the HMM method when applied to elliptic PDEs.

With $ \mathcal{K}_{\disc}:= \{ v \in X_{\disc,\Gamma_{1}} \; : \; v_{\sigma} \leq a
\mbox{ on $\sigma$, for all $\sigma \in \mathcal{E}_{\rm ext}$ such that $\sigma\subset \Gamma_{3}$}\}$,
the gradient scheme \eqref{gsseepage} corresponding to this gradient discretisation
can be recast as
\begin{equation}\label{HMM-ineq-sig}
\left\{
\begin{array}{l}
\dsp \mbox{Find}\; u \in \mathcal K_\disc \mbox{ such that, for all } v\in \mathcal{K}_{\disc}\,,\\
\dsp\sum_{K\in \mesh}|K|\Lambda_K\nabla_{K}u\cdot \nabla_{K}(u-v)+ \dsp\sum_{K\in \mesh}R_{K}(u-v)^{T}\mathbb{B}_{K}R_{K}(u) \\
~\hfill\leq 
\dsp\sum_{K\in \mesh}(u_{K}-v_{K})\dsp\int_{K}f(x) \ud x.
\end{array}
\right.
\end{equation}
where $\Lambda_K$ is the value of $\Lambda$ in the cell $K$ (it is usual -- although not mandatory -- to assume that $\Lambda$
is piecewise constant on the mesh) and, for $K\in\mesh$,
$\mathbb{B}_K$ is a symmetric positive definite matrix of size ${\rm Card}(\mathcal E_K)$.
This matrix is associated to $A_K$, see \cite{B1} for the details.

Under classical assumptions on the mesh and on the
matrices $\mathbb{B}_K$, it is shown in \cite{B1} that the constant $C_\disc$ can be bounded above by quantities
not depending on the mesh size, and that $W_\disc(\bpsi)$ is of order $\mathcal O(h_\mesh)$.
If $d\le 3$ and $\bar u\in H^2(\O)$, we can construct the interpolant $v_\disc=((v_K)_{K\in\mesh},(v_\edge)_{\edge\in\mesh})$ with $v_K=\bar u(x_K)$ and $v_\edge=\bar u(\centeredge)$
and, by \cite[Propositions 13.14 and 13.15]{S1}, we have 
\[
S_\disc(\bar u,v_\disc)=||\Pi_\disc v_\disc-\bar u||_{L^2(\O)}+ ||\nabla_\disc v_\disc-\nabla u||_{L^2(\O)^d}\le Ch_\mesh||\bar u||_{H^2(\O)}.
\]
Under the assumption that $\bar u\in W^{2,\infty}(\O)$, this estimate is
also proved in \cite{B10} (with $||\bar u||_{H^2(\O)}$ replaced with $||\bar u||_{W^{2,\infty}(\O)}$).
If $\bar u\in\mathcal K$ and $a$ is constant, this interpolant $v_\disc$ belongs to
$\mathcal K_\disc$. Moreover, given the definition \eqref{def:TD} of
$\mathbb{T}_\disc$, we have $(\mathbb{T}_\disc v_\disc)_{|\sigma}-\gamma(\bar u)(\overline{x}_\sigma)=v_\sigma-\bar u(\overline{x}_\sigma)=0$. Hence, using this $v_\disc$ in Theorem \ref{theorem:improveseepage},
we see that
the HMM scheme for the Signorini problem enjoys an order 1 rate of convergence.
A non-constant barrier $a$ can be approximated by a piecewise constant barrier on $\edgesext$
and Theorem \ref{th:improvedapproxseepage} shows that, with this approximation, we still
have an order 1 rate of convergence.
Note that since this convergence also involves the gradients, a first order convergence
is optimal for a low-order method such as the HMM method.


\subsubsection{Obstacle problem}

We still take $\polyd$ a potytopal mesh of $\O$, and we consider $\disc=(X_{\disc,0},\Pi_\disc,\nabla_\disc)$ given by $X_{\disc,0}=X_{\disc,\dr\O}$ (see \eqref{def.XDG})
and $\Pi_\disc$ and $\nabla_\disc$ as in Section \ref{sec:HMM.sig}.
Using $\disc$ in \eqref{gsobs}, we obtain the HMM method for the obstacle problem.
Setting $\mathcal{K}_{\disc}:= \{ v \in X_{\disc,0} \; : \; v_{K} \leq g \; \mbox{on $K$,
for all } K \in \mesh\}$,
this method reads
\begin{equation}\label{HMM-ineq-obs}
\left\{
\begin{array}{l}
\dsp \mbox{Find}\; u \in \mathcal{K}_{\disc}\mbox{ such that, for all } v\in \mathcal{K}_{\disc}\,,\\
\dsp\sum_{K\in \mesh}|K|\Lambda_K\nabla_{K}u\cdot \nabla_{K}(u-v)+ \dsp\sum_{K\in \mesh}R_{K}(u-v)^{T}\mathbb{B}_{K}R_{K}(u) \\
~\hfill\leq 
\dsp\sum_{K\in \mesh}(u_{K}-v_{K})\dsp\int_{K}f(x) \ud x
\end{array}
\right.
\end{equation}

Using the same interpolant as in Section \ref{sec:HMM.sig}, we see that
Theorem \ref{theorem:improveobse} (for a constant barrier $g$) and
Theorem \ref{th:improvedapproxobs} (for a piecewise constant approximation of $g$)
provide an order 1 convergence rate.

To our knowledge, the HMM method has never been considered before for the Signorini or the obstacle problem. These methods are however fully relevant for these models,
since HMM schemes have been developed to deal with anisotropic heterogeneous
diffusion processes in porous media (processes involved in seepage problems for example).
The error estimates obtained here on the HMM method,
through the development of a gradient scheme framework for variational inequalities, are therefore
new and demonstrate that this framework has applications beyond merely giving back known
results for methods such as $\mathbb{P}_1$ finite elements.



\section{Numerical results}\label{sec:numer}

We present numerical experiments to highlight the efficiency of the HMM methods for variational inequalities, and to verify our theoretical results.
To solve the inequalities \eqref{HMM-ineq-obs} and \eqref{HMM-ineq-sig}, we use the monotony algorithm of \cite{A2-23}.
We introduce the fluxes $(F_{K,\sigma}(u))_{K\in\mesh,\,\sigma\in\mathcal E_K}$ defined for
$u\in H_\disc$ ($H_\disc = X_{\disc,\Gamma_1}$ for the Signorini problem, $H_\disc = X_{\disc,0}$
for the obstacle problem) through the formula \cite[Eq. (2.25)]{B5}
\begin{multline*}
\forall K \in \mesh,\;\forall v \in H_\disc :\\
 \sum_{\sigma \in \mathcal{E}_K}|\sigma| F_{K,\sigma}(u)
(v_K-v_\sigma)=|K|\Lambda_K \nabla_K u \cdot \nabla_K v
+R_K^{T}(u)\mathbb{B}_K R_{K}(v).
\end{multline*}
Note that the flux $F_{K,\sigma}$ thus constructed is an approximation of
$-\frac{1}{|\sigma|}\int_\sigma \Lambda\nabla \bar u\cdot\mathbf{n}_{K,\sigma}$.
The variational inequalities \eqref{HMM-ineq-obs} and \eqref{HMM-ineq-sig} can then be re-written 
in terms of the balance and conservativity of the fluxes in the following ways:
\begin{itemize}
\item Signorini problem:
\[
\begin{aligned}
\sum_{\sigma \in \mathcal{E}_K}|\sigma|F_{K,\sigma}(u)= m(K)f_K, &\quad \forall K \in \mesh,\\
F_{K,\sigma}(u)+F_{L,\sigma}(u)= 0, &\quad \forall \sigma\in\mathcal E_{\rm int}\mbox{ with }
\mesh_\sigma=\{K,L\},\\
u_\sigma = 0, &\quad \forall \sigma \in \mathcal{E}_{\rm ext} \mbox{ such that }\sigma\subset  \Gamma_1,\\
F_{K,\sigma}(u)=0, &\quad \forall K \in \mesh\,,\forall \sigma \in \mathcal{E}_K \mbox{ such that }\sigma\subset  \Gamma_2,\\
F_{K,\sigma}(u)(u_\sigma - a_\sigma)=0 , & \quad\forall K \in \mesh\,, \forall \sigma \in \mathcal{E}_K \mbox{ such that }\sigma\subset  \Gamma_3,\\
-F_{K,\sigma}(u)\leq 0, &\quad \forall K \in \mesh\,, \forall \sigma \in \mathcal{E}_K \mbox{ such that }\sigma\subset  \Gamma_3,\\
u_\sigma \leq a_\sigma,&\quad \forall \sigma \in \mathcal{E}_{\rm ext} \mbox{ such that }\sigma\subset \Gamma_{3} .
\end{aligned}
\]
Here, $a_\sigma$ is a constant approximation of $a$ on $\sigma$.

\item Obstacle problem:
\[
\begin{aligned}
\left(-\sum_{\sigma \in \mathcal{E}_K}|\sigma|F_{K,\sigma}(u)+ m(K)f_K\right)(g_K - u_K)=0 , &\quad \forall K \in \mesh,\\
F_{K,\sigma}(u)+F_{L,\sigma}(u)= 0, &\quad \forall \sigma\in\mathcal E_{\rm int}\mbox{ with }
\mesh_\sigma=\{K,L\},\\
\sum_{\sigma \in \mathcal{E}_K}|\sigma|F_{K,\sigma}(u)\leq m(K)f_K, &\quad \forall K \in \mesh,\\
u_K \leq g_K, &\quad \forall K \in \mesh,\\
u_\sigma=0, &\quad \forall \sigma \in \mathcal E_{\rm ext}.
\end{aligned}
\]
Here, $g_K$ is an approximation of $g$ on $K$, see Section \ref{sec:approxbarriers}.
\end{itemize}

The iteration monotony algorithm detailed in \cite{A2-23} can be naturally applied to these formulations.
We notice that by \cite[Section 4.2]{B-13} the HMM method is monotone when applied
to isotropic diffusion on meshes made of acute triangles; in that case, the convergence of the monotony algorithm can be established as in \cite{A2-23} for finite element and two-point finite volume methods. 
In our numerical tests, we noticed that, even when applied on meshes for which the monotony
of HMM is unknown or fails, the monotony algorithm actually still converges.

All tests given below are performed by MATLAB code using the PDE toolbox.


\subsection*{Test 1}
We investigate the Signorini problem from \cite{A28}:
\begin{align*}
-\Delta \bar u= 2\pi \sin (2\pi x) &\mbox{\quad in $\Omega=(0,1)^2$,} \nonumber\\
\bar{u} =0 &\mbox{\quad on $\Gamma_{1}$,} \nonumber\\
\nabla\bar{u}\cdot \mathbf{n}= 0 &\mbox{\quad on $\Gamma_{2}$,} \nonumber\\
\left.
\begin{array}{r}
\dsp \bar{u} \geq 0\\
\dsp\nabla\bar{u}\cdot \mathbf{n} \geq 0\\
\dsp\bar{u}\nabla\bar{u}\cdot \mathbf{n}= 0
\end{array} \right\} 
& \mbox{\quad on}\; \Gamma_{3},\nonumber
\end{align*}
with $\Gamma_1 = [0,1]\times \{1\}$, $\Gamma_2 = (\{0\}\times [0,1]) \cup (\{1\} \times [0,1])$ and $\Gamma_3=[0,1]\times \{0\}$.
Here, the domain is meshed with triangles produced by INITMESH.

Figure \ref{fig:A31} presents the graph of the approximate solution obtained
on a mesh of size $0.05$. The solution compares very well with linear finite elements solution from \cite{A-31}; the graph seems to perfectly capture the point where the condition on $\Gamma_3$ changes from Dirichlet to Neumann. Table \ref{tab:A31} shows the
number of iterations (NITER) of the monotony algorithm, required to obtain the HMM solution for various mesh sizes. Herbin in \cite{A2-23} proves that this number of iterations is theoretically bounded by the number of edges included in $\Gamma_3$. We observe that NITER is much less than this worst-case bound. We also notice the robustness of this monotony algorithm:
reducing the mesh size does not significantly affect the number of iterations.

\begin{figure}
	\begin{center}
	\includegraphics[scale=0.7]{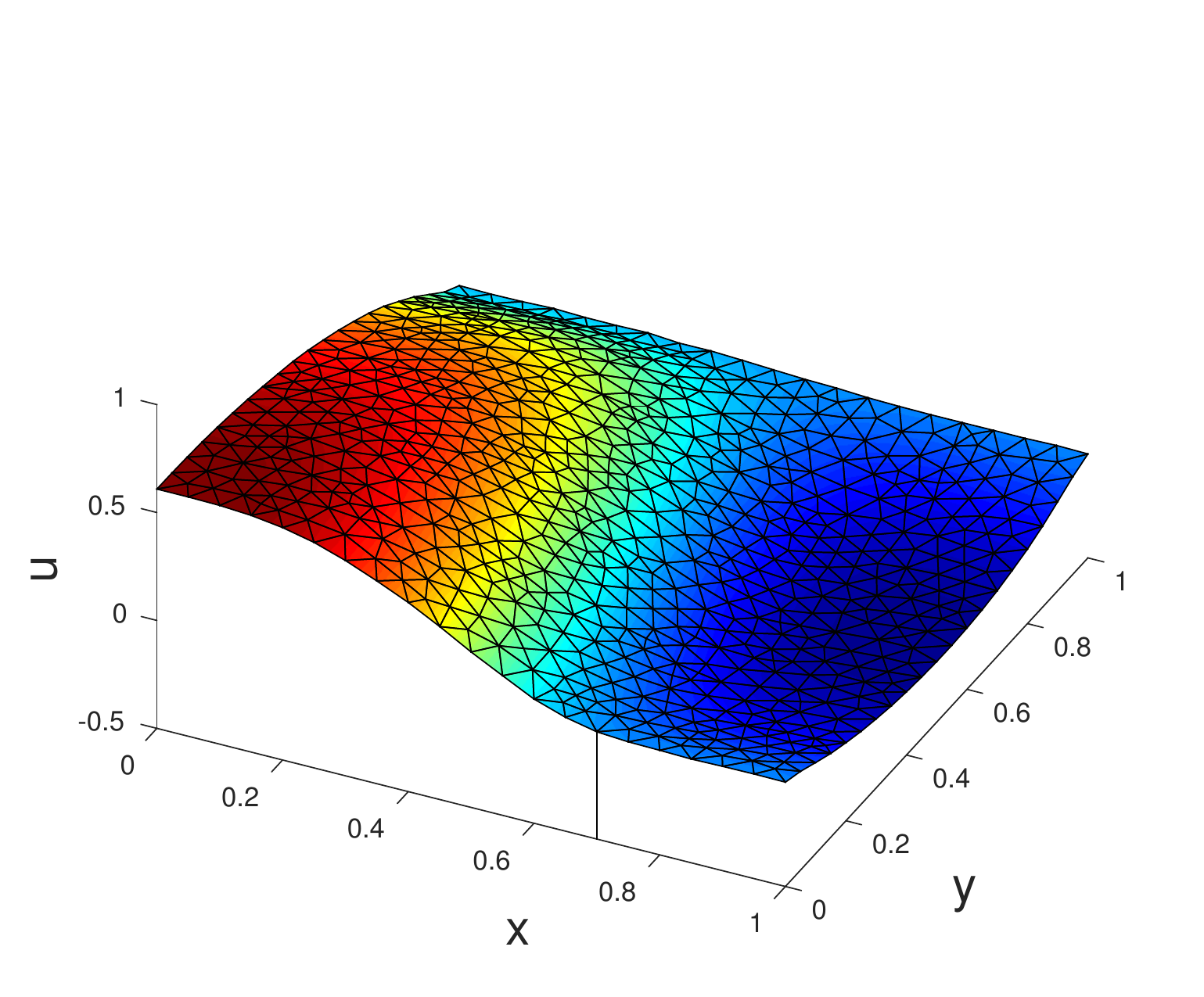}
	\end{center}
	\caption{The HHM solution for Test 1 with $h=0.05$. The line represents where the
boundary condition on $\Gamma_3$ changes from Neumann to Dirichlet.}
	\label{fig:A31}
\end{figure}

\begin{table}
\caption{Number of iterations of the monotony algorithm in Test 1}
\centering
\begin{tabular}{c rrrr}
\hline
Mesh size $h$ 
& 0.0625 & 0.0500 & 0.0250 & 0.0156 \\ 
\hline 
$ \#\{ \sigma \subset \Gamma_{3}\}$  
& 16 & 20 & 40 & 46 \\ 
NITER 
& 4 & 5 & 5 & 6 \\
\hline
\end{tabular}
\label{tab:A31}
\end{table}


\subsection*{Test 2} Most studies on variational inequalities, for instance \cite{A30,A28}, consider
test cases for which the exact solutions are not available; rates of convergence are then assessed by comparing
coarse approximate solutions with a reference approximate solution, computed on a fine mesh.
To more rigorously assess the rates of convergence,  we
develop here a new heterogeneous test case for Signorini boundary conditions, which has
an analytical solution with non-trivial one-sided conditions on $\Gamma_3$
(the analytical solution switches from
homogeneous Dirichlet to homogeneous Neumann at the mid-point of this boundary).

In this test case, we consider here \eqref{seepage1}--\eqref{sigcondition} with
the geometry of the domain presented in Figure \ref{fig-geomexact}, left.

\begin{figure}
\begin{tabular}{c@{\hspace*{3em}}c}
\input{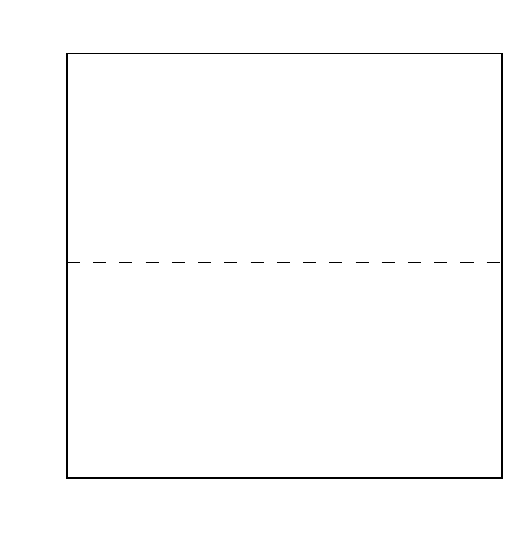_t} & \raisebox{1.5em}{\includegraphics[width=.35\linewidth]{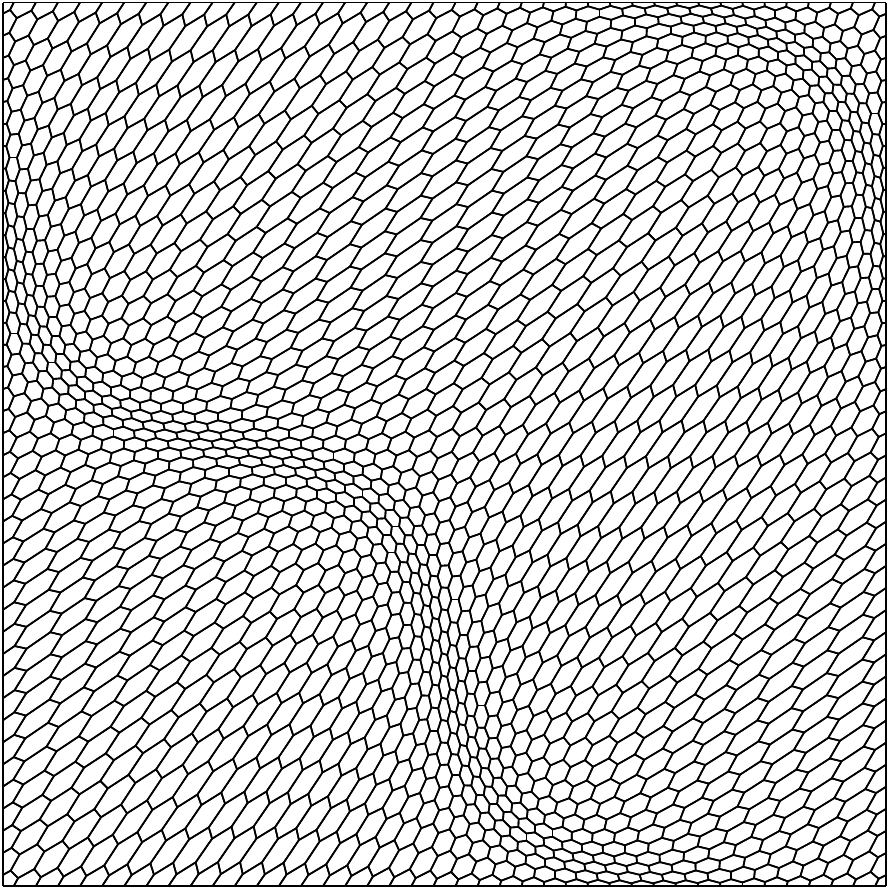}}
\end{tabular}
\caption{Test 2: geometry and diffusion (left), typical grid (right).}\label{fig-geomexact}
\end{figure}

The exact solution is
\begin{align*}
\bar u(x,y)=\left\{
\begin{array}{ll}
P(y)h(x) & \mbox{for $y \in (0,\frac{1}{2})$ and $x \in (0,1)$},\\
x g(x)G(y) & \mbox{for $y \in (\frac{1}{2},1)$ and $ x \in (0,1)$}.
\end{array} \right.
\end{align*}
To ensures that \eqref{seepage2} holds on the lower part of $\Gamma_1$,
and that the normal derivatives $\Lambda\nabla \bar u\cdot\mathbf{n}$
at the interface $y=\frac12$ match, we take $P$ such that $P(0)=P(\frac{1}{2})=P'(\frac{1}{2})=0$.
Assuming that $P<0$ on $(0,\frac12)$, the conditions $\partial_n \bar u = -\partial_x \bar u= 0$
and $\bar u < 0$ on the lower half of $\Gamma_3$,
as well as the homogeneous condition $\partial_n \bar u =0$ on the lower half of $\Gamma_2$,
will be satisfied if $h$ is such that $h'(0)=h'(1)=0$ and $h(0)=1$.
We choose
\[
P(y)=-y\left(y-\frac{1}{2}\right)^2\quad\mbox{ and }h(x)= \cos (\pi x).
\]
Given our choice of $P$, ensuring that $\div(\Lambda\nabla \bar u)\in L^2(\Omega)$
(i.e. that $\bar u$ and the normal derivatives match at $y=\frac12$) can be done by taking
$G$ such that $G(\frac12)=G'(\frac12)=0$. The boundary condition
\eqref{seepage2} on the upper part of $\Gamma_1$ is satisfied if
$G(1)=0$.
The boundary conditions $\partial_n \bar u=0$ on the upper part of $\Gamma_2$,
and $\bar u=0$ and $\partial_n \bar u<0$ on the upper part of $\Gamma_3$,
are enforced by taking $g$ such that $g(1)+g'(1)=0$ and $g(0)>0$
(with $G>0$ on $(\frac12,1)$). We select here 
\[
g(x)=\cos^2 \left(\frac{\pi}{2}x\right)=\frac{1+\cos(\pi x)}{2} \quad \mbox{and} \quad G(y)=(1-y)\left( y-\frac{1}{2}\right)^2.
\]
This $\bar u$ is then the analytical solution to the following problem
\begin{align*}
-\mathrm{div}(\Lambda\nabla\bar{u})= f &\mbox{\quad in $\Omega$,} \\
\bar{u} =0 &\mbox{\quad on $\Gamma_{1}$,}\\
\Lambda\nabla\bar{u}\cdot \mathbf{n}= 0 &\mbox{\quad on $\Gamma_{2}$,}\\
\left.
\begin{array}{r}
\dsp \bar{u} \leq 0\\
\Lambda\nabla\bar{u}\cdot \mathbf{n} \leq 0\\
\bar{u}\Lambda\nabla\bar{u}\cdot \mathbf{n}= 0
\end{array} \right\} 
& \mbox{\quad on}\; \Gamma_{3},
\end{align*}
with, for $y \le \frac{1}{2}$,
\[
f(x,y)=100\left[\pi^2y \cos(\pi x)\left(y - \frac{1}{2}\right)^2 - 2y \cos(\pi x)
- 2 \cos(\pi x)(2y - 1)\right],
\]
and, for $y >\frac{1}{2}$,
\begin{align*}
f(x,y)={}&\frac{\pi^2 x}{2}\cos(\pi x)(y - 1)\left(y - \frac{1}{2}\right)^2 -
2x\cos^2\left(\frac{\pi x}{2}\right)(3y - 2)\\
& + \pi \sin(\pi x)(y - 1)\left(y - \frac12\right)^2.
\end{align*}


\begin{figure}[ht]
	\begin{center}
	\includegraphics[scale=0.5]{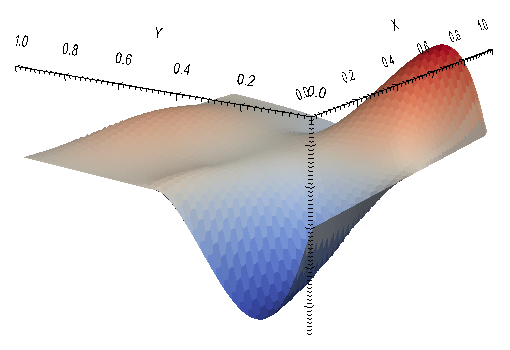}
	\end{center}
	\caption{The HHM solution for Test 2 on an hexagonal mesh with $h=0.07$.}
	\label{fig:approx}
\end{figure}
 
We test the scheme on a sequence of meshes (mosly) made of hexagonal cells.
The third mesh in this sequence (with mesh size $h=0.07$) is represented
in Figure \ref{fig-geomexact}, right.

The relative errors on $\bar u$ and $\nabla\bar u$, and the corresponding orders of convergence (computed from one mesh to the next one), are presented in Table \ref{tab:exact3}. For the HMM method, Theorem \ref{theorem:improveseepage} predicts a convergence of order 1 on the gradient if the solution 
belongs to $H^2$ and $\Lambda\nabla\bar u\in H^1$; the observed numerical rates are not slightly above
these values, probably due to the fact that $\Lambda$ is not Lipschitz-continuous here (and thus
the regularity  $\Lambda\nabla\bar u\in H^1$ is not satisfied).
As in the previous test, the number of iterations of the monotony algorithm
remains well below the theoretical bound.

\begin{table}[h]
\caption{Error estimate and number of iterations for Test 2} 
\centering
\begin{tabular}{c rrrr}
\hline
mesh size $h$ 
& 0.24 & 0.13 & 0.07 & 0.03\\ 
\hline 
$\frac{\| \bar u - \Pi_\disc u\|_{L^{2}(\O)}}{\|\bar u\|_{L^2(\O)}}$ 
& 0.6858 & 0.2531 & 0.1355 & 0.0758\\ 
Order of convergence 
&-- & 1.60 & 0.92 & 0.84 \\
\hline
$\frac{\| \nabla \bar u - \nabla_\disc u\|_{L^{2}(\O)}}{\|\nabla\bar u\|_{L^2(\O)}}$ 
& 0.4360 & 0.2038 & 0.1041 & 0.0542\\ 
Order of convergence 
&-- & 1.22 & 0.99 & 0.95\\
\hline
$\#\{\sigma \subset \Gamma_{3}\}$ 
& 20 & 40 & 80 & 160 \\
NITER 
& 5 & 5 & 6 & 7 \\
\hline
\end{tabular}
\label{tab:exact3}
\end{table}

The solution to the HMM scheme for the third mesh in the family used in Table \ref{tab:exact3}
is plotted in Figure \ref{fig:approx}.
On $\Gamma_3$, the saturated constraint for the exact solution changes from Neumann $\nabla\bar u\cdot\mathbf{n}=0$ to Dirichlet $\bar u=0$ at $y=0.5$. It is clear on Figure \ref{fig:approx} that the HMM scheme
captures well this change of constraint. The slight bump visible at $y=1/2$ is most
probably due to the fact that the mesh is not aligned with the heterogeneity of $\Lambda$
(hence, in some cells the diffusion tensor takes two -very distinct- values, and its approximation by one
constant value smears the solution).
  

When meshes are aligned with data heterogeneities, such bumps do not appear. This is illustrated
in Figure \ref{fig:kershaw}, in which we represent the solution obtained when using a ``Kershaw'' mesh as in
the FVCA5 benchmark \cite{HH08}. This mesh has size $h=0.16$, 34 edges on $\Gamma_3$,
and the monotony algorithm converges in 7 iterations. The
relative $L^2$ error on $\bar u$ and $\nabla\bar u$ are respectively $0.017$ and $0.019$.
As expected on these kinds of extremely distorted
meshes, the solution has internal oscillations, but is otherwise qualitatively good.
In particular, despite distorted cells near the boundary $\Gamma_3$, the
transition between the Dirichlet and the Neumann boundary conditions is well
captured.

\begin{figure}[ht]
	\begin{center}
	\begin{tabular}{c@{\hspace*{1em}}c}
	\includegraphics[width=.55\linewidth]{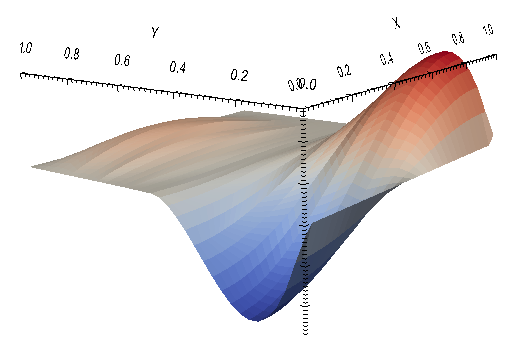} & \includegraphics[width=.4\linewidth]{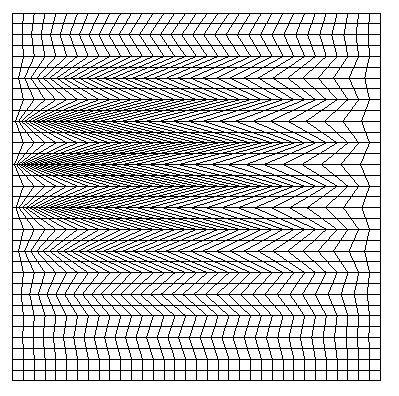}
	\end{tabular}
	\end{center}
	\caption{The HHM solution (left) for Test 2 on a Kershaw mesh (right).}
	\label{fig:kershaw}
\end{figure}



\subsection*{Test 3}
In this case test, we apply the HMM method to the homogeneous obstacle problem:  
\begin{align*}
(\Delta \bar u+C)(\psi - \bar{u}) = 0, &\quad\mbox{in $\Omega=(0,1)^2$,} \\
-\Delta \bar u\geq C,&\quad \mbox{in $\Omega$,} \\
\bar{u}\geq \psi, &\quad\mbox{in $\Omega$,}\\
\bar{u} = 0, &\quad\mbox{on $\partial\Omega$.}
\end{align*} 
The constant $C$ is negative, and the obstacle function $\psi(x,y)= - \min (x, 1-x, y, 1-y)$
satisfies $\psi =0$ on $\partial\O$.

\par Figure \ref{fig:A34} shows the graph of the HMM solution to the above obstacle problem with respect to $h=0.05$. Table \ref{tab:A34} illustrates the performance of the method and the algorithm. Here again, the number of iterations required to obtain the solution is far less than the number of cells, which is a theoretical bound in the case of obstacle problem \cite{A2-23}. Our results compare well with the results obtained by semi-iterative Newton-type methods in \cite{brugnano2009}, which indicate that decreasing $|C|$ contributes to the difficulty of the problem
(leading to an increase number of iterations). We note that, for a mesh of nearly 14,000 cells, we only require 29 iterations if $|C|=5$ and 14 iterations if $|C|=20$. On a mesh of 10,000 cells,
the semi-iterative Newton-type method of \cite{brugnano2009} requires 32 iterations
if $|C|=5$ and 9 iterations if $|C|=20$.

\begin{figure}[ht]
	\begin{center}
	\includegraphics[scale=0.5]{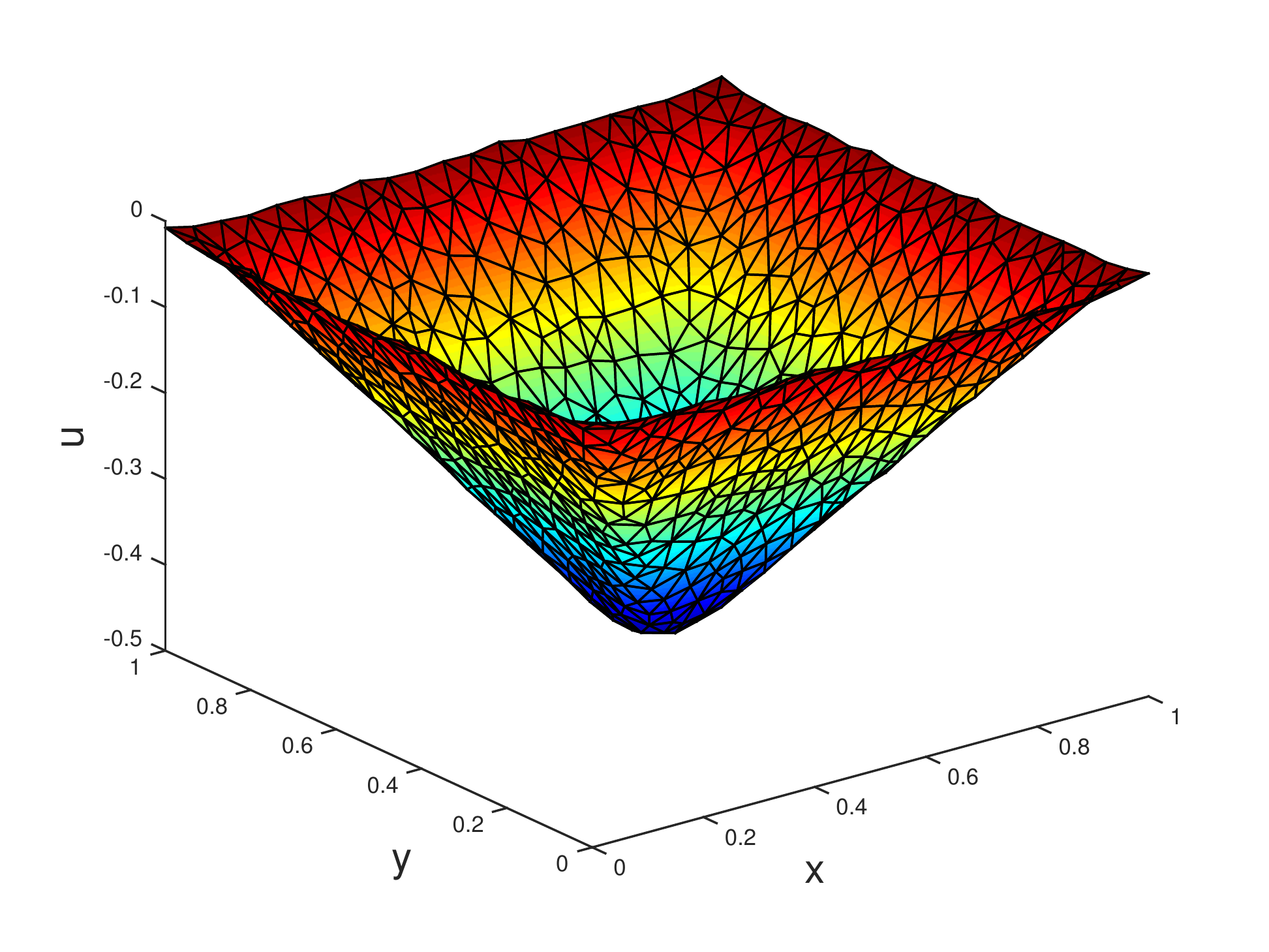}
	\end{center}
	\caption{The HHM solution for Test 3
	 with $h=0.05$, $C=-20$.}
	\label{fig:A34}
\end{figure}

\begin{table}[h]
\caption{The number of iterations for various $C$} 
\centering
\begin{tabular}{c rrrr}
\hline
mesh size $h$ 
& 0.016 & 0.025 & 0.050 & 0.062 \\ 
\hline 
$\# \mesh$
& 14006 & 5422 & 1342 & 872\\ 
NITER $(C=-5)$ 
& 29 & 20 & 12 & 11\\
NITER $(C=-10)$ 
&23 & 18 & 10 & 9\\
NITER $(C=-15)$ 
& 19 & 13 & 9 & 8\\
NITER $(C=-20)$ 
& 14 & 12 & 7 & 8\\
\hline
\end{tabular}
\label{tab:A34}
\end{table}



\section{Proofs of the theorems}
\label{proof}
\vskip .5pc
\noindent {\bf Proof of Theorem \ref{errorseepage}.} 
Since $\mathcal{K}_{\mathcal{D}}$ is a non-empty convex closed set, the existence and uniqueness of a solution to Problem (\ref{gsseepage})
follows from Stampacchia's theorem.

We note that $\Lambda\nabla\bar{u}\in H_{\div}(\O)$ since
\begin{equation}\label{seep:distr}
-\mathrm{div}(\Lambda\nabla\bar{u}) = f\mbox{ in the sense of distributions}
\end{equation}
(use $v=\bar u+\psi$ in \eqref{weseepage}, with $\psi\in C^\infty_c(\O)$). 
Taking $\bpsi=\Lambda\nabla\bar{u}$ in the definition \eqref{conformityseepage} of $W_\disc$, for any $z \in X_{\mathcal{D},\Gamma_{1}}$ we get
\begin{multline}
\int_{\Omega}\nabla_{\mathcal{D}} z \cdot \Lambda\nabla\bar{u}\ud x + \int_{\Omega}\Pi_{\mathcal{D}} z \cdot \mathrm{div}(\Lambda\nabla\bar{u}){\ud x} 
\leq\\
 \|\nabla_{\mathcal{D}} z\|_{L^{2}(\Omega)^{d}}
W_{\mathcal{D}}(\Lambda\nabla\bar{u})
+\langle \gamma_\bfn(\Lambda\nabla\bar u),
\mathbb{T}_{\mathcal{D}}z\rangle.
\label{seepagepr1}
\end{multline}
Let us focus on the last term on the right-hand side
of the above inequality. For any $v_{\mathcal{D}} \in \mathcal K_\disc$, we have
\begin{multline}
\langle \gamma_\bfn(\Lambda\nabla\bar{u}), \mathbb{T}_{\mathcal{D}}(v_{\mathcal{D}}-u) \rangle=\\
\langle \gamma_\bfn(\Lambda\nabla\bar{u}), \mathbb{T}_{\mathcal{D}}v_{\mathcal{D}}-\gamma\bar{u} \rangle
+\langle \gamma_\bfn(\Lambda\nabla\bar{u}),\gamma\bar{u}-\mathbb{T}_{\mathcal{D}}u\rangle.
\label{seepagepr2}
\end{multline}
The definition of the space $H_{\Gamma_{1}}^{1/2}(\partial\Omega)$ shows that there exists $w \in H^1(\Omega)$ such that $\gamma w = \mathbb{T}_{\mathcal{D}}u$ (this implies that $w\in \mathcal K$). According to the definition of the duality product \eqref{trace}, we have
\begin{equation*}
\langle \gamma_\bfn(\Lambda\nabla\bar{u}),\gamma\bar{u}- \mathbb{T}_{\mathcal{D}}u\rangle
=\int_{\Omega}\Lambda\nabla\bar{u}\cdot\nabla(\bar{u}-w)\ud x+\int_{\Omega}\mathrm{div}(\Lambda\nabla\bar{u})(\bar{u}-w) \ud x.
\end{equation*}
Since $\bar{u}$ satisfies \eqref{seep:distr}, it follows that
\begin{equation}\label{seepagepr3}
\langle \gamma_\bfn(\Lambda\nabla\bar{u}),\gamma\bar{u}- \mathbb{T}_{\mathcal{D}}u\rangle
=\int_{\Omega}\Lambda\nabla\bar{u}\cdot\nabla(\bar{u}-w)\ud x
-\int_{\Omega}f(\bar{u}-w) \ud x,
\end{equation}
which leads to, taking $w$ as a test function in the weak formulation \eqref{weseepage},
\begin{equation}\label{seepagepr3.1}
\langle \gamma_\bfn(\Lambda\nabla\bar{u}),\gamma\bar{u}- \mathbb{T}_{\mathcal{D}}u\rangle
\leq 0.\end{equation}
From this inequality, recalling \eqref{seep:distr} and \eqref{seepagepr2} and setting $z = v_{\mathcal{D}}- u \in X_{\mathcal{D},\Gamma_{1}}$ in (\ref{seepagepr1}), we deduce that
\begin{multline}\label{takeover}
\int_{\Omega}\nabla_{\mathcal{D}}(v_{\mathcal{D}} - u)\cdot \Lambda\nabla\bar{u}\ud x
+\int_{\Omega}f\Pi_{\mathcal{D}}(u-v_{\mathcal{D}}) \ud x
\leq\\
\|\nabla_{\mathcal{D}}(v_\disc - u)\|_{L^{2}(\Omega)^{d}}
W_{\mathcal{D}}(\Lambda\nabla\bar{u})+G_{\mathcal{D}}(\bar{u},v_{\mathcal{D}}).
\end{multline}
We use the fact that $u$ is the solution to Problem (\ref{gsseepage}) to bound from below the term involving $f$
and we get
\begin{multline*}
\int_{\Omega}\Lambda\nabla_{\mathcal{D}}(v_{\mathcal{D}} - u)\cdot(\nabla\bar{u}-\nabla_{\mathcal{D}}u) \ud x \leq\\
 \|\nabla_{\mathcal{D}}(v_{\mathcal{D}} - u)\|_{L^{2}(\Omega)^{d}}
W_{\mathcal{D}}(\Lambda\nabla\bar{u})+G_{\mathcal{D}}(\bar{u},v_{\mathcal{D}})^+
\end{multline*}
(note that we also used the bound $G_\disc\le G_\disc^+$). Adding and subtracting $\nabla_{\mathcal{D}}v_{\mathcal{D}}$ in $\nabla\bar u-\nabla_\disc u$, and using Cauchy-Schwarz's inequality, this leads to
\begin{multline*}
\underline{\lambda}\|\nabla_{\mathcal{D}}v_{\mathcal{D}} - \nabla_{\mathcal{D}}u\|_{L^{2}(\Omega)^{d}}^{2}
\leq \\
\|\nabla_{\mathcal{D}}v_{\mathcal{D}} - \nabla_{\mathcal{D}}u\|_{L^{2}(\Omega)^{d}}
\bigg(W_{\mathcal{D}}(\Lambda\nabla\bar{u})
+\overline{\lambda}S_{\mathcal{D}}(\bar{u},v_{\mathcal{D}})\bigg)
+G_{\mathcal{D}}(\bar{u},v_{\mathcal{D}})^+.
\end{multline*}
Applying Young's inequality gives
\begin{equation}
\|\nabla_{\mathcal{D}}v_{\mathcal{D}} - \nabla_{\mathcal{D}}u\|_{L^{2}(\Omega)^{d}} \leq
\sqrt{\frac{2}{\underline{\lambda}}G_{\mathcal{D}}(\bar{u},v_{\mathcal{D}})^+
+\frac{1}{\underline{\lambda}^{2}}\left[W_{\mathcal{D}}(\Lambda\nabla\bar{u})+\overline{\lambda}S_{\mathcal{D}}(\bar{u},v_{\mathcal{D}})\right]^{2}}.
\label{proof1obs}
\end{equation}
Estimate (\ref{errorseepage1}) follows from $ \| \nabla_{\mathcal{D}}u-\nabla\bar{u} \|_{L^{2}(\Omega)^{d}} \leq  \| \nabla_{\mathcal{D}}u - \nabla_{\mathcal{D}}v_{\mathcal{D}}\|_{L^{2}(\Omega)^{d}} + \| \nabla_{\mathcal{D}}v_{\mathcal{D}}-\nabla\bar{u} \|_{L^{2}(\Omega)^{d}} 
$ and $\sqrt{a+b}\leq\sqrt{a}+\sqrt{b}$. Using (\ref{coercivityseepage}) and \eqref{proof1obs}, we obtain
\begin{equation*}
\|\Pi_{\mathcal{D}}v_{\mathcal{D}}-\Pi_{\mathcal{D}}u\|_{L^{2}(\Omega)} \leq
C_{\mathcal{D}}\sqrt{\frac{2}{\underline{\lambda}}G_{\mathcal{D}}(\bar{u},v_{\mathcal{D}})^+
+\frac{1}{\underline{\lambda}^{2}}\left[W_{\mathcal{D}}(\Lambda\nabla\bar{u})+\overline{\lambda}S_{\mathcal{D}}(\bar{u},v_{\mathcal{D}})\right]^{2}
}.
\end{equation*}
By writing $\|\Pi_{\mathcal{D}}u-\bar{u}\|_{L^{2}(\Omega)} \leq 
\|\Pi_{\mathcal{D}}u-\Pi_{\mathcal{D}}v_{\mathcal{D}}\|_{L^{2}(\Omega)}
+ \|\Pi_{\mathcal{D}}v_{\mathcal{D}}-\bar{u}\|_{L^{2}(\Omega)}$, the above inequality shows that
\begin{multline*}
\|\Pi_{\mathcal{D}}u-\bar{u}\|_{L^{2}(\Omega)} \leq\\
C_{\mathcal{D}}\sqrt{\frac{2}{\underline{\lambda}}G_{\mathcal{D}}(\bar{u},v_{\mathcal{D}})^+
+\frac{1}{\underline{\lambda}^{2}}\bigg(W_{\mathcal{D}}(\Lambda\nabla\bar{u})+\overline{\lambda}S_{\mathcal{D}}(\bar{u},v_{\mathcal{D}})\bigg)^{2}
}
+S_{\mathcal{D}}(\bar{u},v_{\mathcal{D}}).
\end{multline*}
Applying $\sqrt{a+b}\leq\sqrt{a}+\sqrt{b}$ again, Estimate \eqref{errorseepage2} is obtained and the proof is complete.

\medskip

\noindent {\bf Proof of Theorem \ref{errorobs}.} 

The proof of this theorem is very similar as the proof of \cite[Theorem 1]{Y}. We however
give some details for the sake of completeness.

As for the Signorini problem, the existence and uniqueness of the solution to Problem \eqref{gsobs}
follows from Stampacchia's theorem. Let us now establish the error estimates.
Under the assumption that $\mathrm{div}(\Lambda\nabla \bar{u}) \in L^{2}(\Omega)$, we note that $\Lambda\nabla\bar{u} \in 
H_{\mathrm{div}}(\Omega)$. For any $w\in X_{\mathcal{D},0}$, using $\bpsi=\Lambda\nabla \bar{u}$ in the definition (\ref{conformityobs})
of $W_\disc$
therefore implies
\begin{equation}
\int_{\Omega}\nabla_{\mathcal{D}} w \cdot \Lambda\nabla\bar{u} \ud x + \int_{\Omega}\Pi_{\mathcal{D}} w \cdot \mathrm{div}(\Lambda\nabla\bar{u}) \ud x \leq \|\nabla_{\mathcal{D}} w\|_{L^{2}(\Omega)^{d}}
W_{\mathcal{D}}(\Lambda\nabla\bar{u}).
\label{8}\end{equation}
For any $v_{\mathcal{D}} \in \mathcal{K}_{\mathcal{D}}$, one has
\begin{multline}\label{th:error-init}
\int_{\Omega}\Pi_{\mathcal{D}}(u-v_{\mathcal{D}})\mathrm{div}(\Lambda\nabla\bar{u})\ud x=
\int_{\Omega}(\Pi_{\mathcal{D}}u-g)(\mathrm{div}(\Lambda\nabla\bar{u})+f) \ud x\\
+\int_{\Omega}(g-\Pi_{\mathcal{D}}v_{\mathcal{D}})(\mathrm{div}(\Lambda\nabla\bar{u})+f) \ud x
-\int_{\Omega}f\Pi_{\mathcal{D}}(u-v_{\mathcal{D}}) \ud x.
\end{multline}
It is well known that the solution to the weak formulation \eqref{weobs}
satisfies \eqref{obs2} in the sense of distributions (use test functions $v=\bar u-\varphi$
in \eqref{weobs}, with $\varphi\in C^\infty_c(\O)$ non-negative). Hence, under our
regularity assumptions, \eqref{obs2} holds a.e.\ and, since $u \in \mathcal K_\disc$, we obtain ${\displaystyle\int_{\Omega}(\Pi_{\mathcal{D}}u-g)(\mathrm{div}(\Lambda\nabla\bar{u})+f)\ud x} \leq 0$. Hence,
\begin{align*}
\int_{\Omega}\Pi_{\mathcal{D}}(u-v_{\mathcal{D}})&\mathrm{div}(\Lambda\nabla\bar{u})\ud x\\
\le& \int_{\Omega}(g-\Pi_{\mathcal{D}}v_{\mathcal{D}})(\mathrm{div}(\Lambda\nabla\bar{u})+f)\ud x
-\int_{\Omega}f\Pi_{\mathcal{D}}(u-v_{\mathcal{D}})\ud x\\
=& \int_{\Omega}(g-\bar{u})(\mathrm{div}(\Lambda\nabla\bar{u})+f)\ud x
 +\int_{\Omega}(\bar{u}-\Pi_{\mathcal{D}}v_{\mathcal{D}})(\mathrm{div}(\Lambda\nabla\bar{u})+f)\ud x\\
& -\int_{\Omega}f\Pi_{\mathcal{D}}(u-v_{\mathcal{D}}) \ud x.
\end{align*}
Our regularity assumptions ensure that $\bar{u}$ satisfies (\ref{obs1}). Therefore, by definition of $E_\disc$,
\begin{equation}\label{obsinequality}
\int_{\Omega}\varPi_{\mathcal{D}}(u-v_{\mathcal{D}})\mathrm{div}(\Lambda\nabla\bar{u}) \ud x
\le  E_\disc(\bar u,v_\disc)-\int_{\Omega}f\Pi_{\mathcal{D}}(u-v_{\mathcal{D}}) \ud x.
\end{equation}
From this inequality and setting $w = v_{\mathcal{D}}- u \in X_{\mathcal{D},0}$ in (\ref{8}), we obtain
\begin{multline*}
\int_{\Omega}\nabla_{\mathcal{D}}(v_{\mathcal{D}}- u)\cdot \Lambda\nabla\bar{u}\ud x +
\int_{\Omega}f\Pi_{\mathcal{D}}(u-v_{\mathcal{D}}) \ud x
\leq\\
\|\nabla_{\mathcal{D}}({v_\disc} - u)\|_{L^{2}(\Omega)^{d}}
W_{\mathcal{D}}(\Lambda\nabla\bar{u})+E_{\mathcal{D}}(\bar{u},v_{\mathcal{D}}).
\end{multline*}
The rest of proof can be handled in much the same way as proof of Theorem \ref{errorseepage},
taking over the reasoning from \eqref{takeover}. See also \cite{Y} for more details.

\medskip

\noindent {\bf Proof of Theorem \ref{theorem:improveseepage}.} 
We follow the same technique used in \cite{A2}. According to Remark \ref{rem:makesense} and due to the regularity on the solution, since
$\mathbb{T}_\disc v_\disc=\gamma(\bar u)=0$ on $\Gamma_1$ and
$\gamma_\bfn(\Lambda\nabla\bar u)=0$ on $\Gamma_2$, we can write
\begin{equation*}
G_{\mathcal{D}}(\bar{u},v_{\mathcal{D}})=
\int_{\Gamma_{3}}(\mathbb{T}_\disc v_{\mathcal{D}}-\gamma(\bar u))
\gamma_\bfn(\Lambda\nabla\bar{u})\ud x.
\end{equation*}

We notice first that the assumptions on $\bar u$ ensure that it is a solution of the Signorini problem in the strong sense.
Applying Theorem \ref{errorseepage} we have
\begin{multline}\label{to.plug}
\|\nabla_{\mathcal{D}}u - \nabla\bar{u}\|_{L^{2}(\Omega)^{d}}
+ \|\Pi_{\mathcal{D}}u - \bar{u}\|_{L^{2}(\Omega)}\\
\leq 
{C}\left(W_{\mathcal{D}}(\Lambda\nabla\bar{u}) 
 + S_{\mathcal{D}}(\bar{u},v_{\mathcal{D}})+ \sqrt{G_{\mathcal{D}}(\bar{u},v_{\mathcal{D}})^+} \right)
\end{multline}
{with $C$ depending only on $\underline{\lambda}$, $\overline{\lambda}$ and an upper bound of $C_\disc$.}
{Since $S_\disc(\bar u,v_\disc)\le C_1h_{\mathcal M}$ by assumption, it remains to} estimate the last term $G_{\mathcal{D}}(\bar{u},v_{\mathcal{D}})^+$. 
We start by writing 
\begin{align*}
G_\disc(\bar u,v_\disc)&=
\int_{\Gamma_3} (\mathbb{T}_\disc v_{\mathcal{D}}-a)
\gamma_\bfn(\Lambda\nabla\bar{u})\ud x
+\int_{\Gamma_3} (a-\gamma(\bar u))
\gamma_\bfn(\Lambda\nabla\bar{u})\ud x\\
&=\int_{\Gamma_3} (\mathbb{T}_\disc v_{\mathcal{D}}-a)
\gamma_\bfn(\Lambda\nabla\bar{u})\ud x\\
&=\sum_{\sigma\in\edges\,,\;\sigma\subset\Gamma_3}\int_\sigma (\mathbb{T}_\disc v_{\mathcal{D}}-a)\gamma_\bfn(\Lambda\nabla\bar{u})\ud x\\
&=:\sum_{\sigma\in\edges\,,\;\sigma\subset\Gamma_3} G_\sigma(\bar u,v_\disc),
\end{align*}
where the term involving $(a-\gamma(\bar u))
\gamma_\bfn(\Lambda\nabla\bar{u})$ has been eliminated by using \eqref{sigcondition}.
We then split the study on each $\edge$ depending on the cases: either (i) $\gamma\bar u<a$ a.e. on $\sigma$,
or (ii) ${\rm meas}(\{ y\in \sigma\,:\, \gamma\bar{u}(y)-a=0\})>0$ (where ${\rm meas}$ is
the $(d-1)$-dimensional measure).

In Case (i), we have $\gamma_\bfn(\Lambda\nabla\bar{u})=0$ in $\sigma$ since $\gamma\bar{u}$ satisfies \eqref{sigcondition}.
Hence, $G_\sigma(\bar u,v_\disc)=0$.

In Case (ii), let us denote $\nabla_\edge$ the tangential gradient to $\edge$, and
let us recall that, as a consequence of Stampacchia's lemma, if $w\in W^{1,1}(\edge)$
then $\nabla_\edge w=0$ a.e. on $\{y\in\edge\,:\,w(y)=0\}$ (here, ``a.e.'' is for the
measure ${\rm meas}$). Hence, with $w=\gamma\bar u-a$ we obtain at least one $y_{0}\in \sigma$ such that $(\gamma\bar{u}-a)(y_{0})=0$ and $\nabla_\edge(\gamma\bar{u}-a)(y_{0})=0$. Let $F$ be the
face of $\partial\Omega$ that contains $\sigma$. Using a Taylor's expansion along a path 
on $F$ between $y_0$ and $x_\sigma$, we deduce
\begin{equation}\label{taylor-exp}
|(\gamma\bar{u}-a)(x_\sigma)|\le L_F h_\edge^2,
\end{equation}
where $L_F$ depends on $F$ and on the Lipschitz constant of the tangential derivative
$\nabla_{\partial\O} (\gamma\bar{u}-a)$ on $F$
(the Lipschitz-continuity of $\nabla_{\partial\O} (\gamma\bar{u}-a)$
on this face follows from our assumption that $\gamma\bar{u}-a\in W^{2,\infty}(\partial\Omega)$).
Recalling that $||\mathbb{T}_\disc v_\disc-\gamma\bar u(x_\sigma)||_{L^2(\edge)}\le C_2h_\edge^2|\edge|^{1/2}$,
we infer that $||\mathbb{T}_\disc v_\disc-a||_{L^2(\edge)} \leq Ch_{\mathcal M}^{2}|\edge|^{1/2}$,
and therefore that $|G_\sigma(\bar{u},v_{\mathcal{D}})| \leq C
h_{\mathcal M}^{2}||\gamma_\bfn(\Lambda\nabla\bar{u})||_{L^2(\edge)}|\edge|^{1/2}$. Here, $C$
depends only on $\O$, $\bar u$ and $a$.

Gathering the upper bounds on each $G_\sigma$ and using the Cauchy--Schwarz inequality
gives $G_\disc(\bar u,v_\disc) \le Ch_{\mathcal M}^2 ||\gamma_\bfn(\Lambda\nabla\bar{u})||_{L^2(\dr\O)}$, with $C$ depending only on $\O$, $\Lambda$, $\bar u$ and $a$. This implies $G_\disc(\bar u,v_\disc)^+
\le C h_\mesh^2$, and the proof is complete by plugging
this estimate into \eqref{to.plug}.

\medskip

\noindent {\bf Proof of Theorem \ref{theorem:improveobse}.} The proof is very similar
to the proof of Theorem \ref{theorem:improveseepage}. As in this previous proof,
we only have to estimate $E_\disc(\bar u,v_\disc)$, which can be re-written as
\begin{align*}
E_\disc(\bar u,v_\disc)&=
\int_\O (\div(\Lambda\nabla\bar u)+f)(\bar u-g)\ud x
+\int_\O (\div(\Lambda\nabla\bar u)+f)(g-\Pi_\disc v_\disc)\ud x\\
&=\int_\O (\div(\Lambda\nabla\bar u)+f)(g-\Pi_\disc v_\disc)\ud x\\
&=\sum_{K\in\mathcal M}\int_K (\div(\Lambda\nabla\bar u)+f)(g-\Pi_\disc v_\disc)\ud x
=:\sum_{K\in\mathcal M}E_K(\bar u,v_\disc),
\end{align*}
where the term involving $(\div(\Lambda\nabla\bar u)+f)(\bar u-g)$ has been eliminated
using \eqref{obs1}. Each term $E_K(\bar u,v_\disc)$ is then estimated by considering
two cases, namely: (i) either $\bar u<g$ on $K$, in which case $E_K(\bar u,v_\disc)=0$
since $f+\div(\Lambda\bar u)=0$ on $K$, or (ii) $|\{y\in K\,:\,\bar u(y)=g\}|>0$,
in which case $E_K(\bar u,v_\disc)$ is estimated by using a Taylor expansion and
the assumption $||\Pi_\disc v_\disc -\bar u(x_K)||_{L^2(K)}\le C_2 h_K^2|K|^{1/2}$.


\begin{remark}\label{relax:gconst} If $(x_K)_{K\in\mathcal M}$ are the centres of gravity of the cells,
the assumption that $g$ is constant can be relaxed under additional regularity hypotheses.
Namely, if $F:=\div(\Lambda\nabla\bar u)+f\in H^1(K)$ and $g\in H^2(K)$ for any
cell $K$, then by letting $F_K$ and $g_K$ be the mean values on $K$ of $F$
and $g$, respectively, we can write
\begin{align*}
E_K(\bar u,v_\disc) =& \int_K F(g-\Pi_\disc v_\disc)\ud x\\
=& \int_K F(g(x_K)-\Pi_\disc v_\disc)\ud x + \int_K (F-F_K)(g-g_K)\ud x\\
&+ \int_K F(g_K-g(x_K))\ud x=:T_{1,K}+T_{2,K}+T_{3,K}.
\end{align*}
The term $T_{1,K}$ is estimated as in the proof (since $\bar u(x_K) -g(x_K)$ can be estimated using a
Taylor expansion about $y_0$). We estimate $T_{2,K}$ by using the Cauchy-Schwarz inequality
and classical estimates between an $H^1$ function and its average:
\[
|T_{2,K}|\le ||F-F_K||_{L^2(K)}||g-g_K||_{L^2(K)}\le C h_K^2 ||F||_{H^1(K)}||g||_{H^1(K)}.
\]
As for $T_{3,K}$, we use the fact that $g\in H^2(\O)$ and that $x_K$ is the center of gravity
of $K$ to write $|g(x_K)-g_K|\le C h_K^2||g||_{H^2(K)}$.
Combining all these estimates to bound $E_K(\bar u,v_D)$ and using Cauchy-Schwarz inequalities
leads to an upper bound in $\mathcal O(h_{\mathcal M}^2)$ for $E_\disc(\bar u,v_D)$.
\end{remark}



\section{The case of approximate barriers}\label{sec:approxbarriers}

For general barrier functions ($a$ in the Signorini problem, $g$ in the obstacle problem),
it might be challenging to find a proper approximation of $\bar u$
inside $\mathcal K_\disc$. Consider for example the $\mathbb{P}_1$ finite element method; the approximation is
usually constructed using the values of $\bar u$ at the nodes of the mesh, which only ensures
that this approximation is bounded above by the barrier at these nodes, not necessarily everywhere
else in the domain. It is therefore usual to relax the upper bound imposed on the solution to the numerical
scheme, and to consider only approximate barriers in the schemes.


\subsection{The Signorini problem}
We consider the gradient scheme \eqref{gsseepage} with, instead of $\mathcal K_{\disc}$, the following convex set:
\begin{equation}
\widetilde{\mathcal{K}}_{\disc} = \{ v \in X_{\disc,\Gamma_{1}}\;:\; \mathbb{T}_{\disc}v \leq a_{\disc} \; \mbox{on}\; \Gamma_3 \},
\end{equation}
where $a_{\disc} \in L^2(\partial\O)$ is an approximation of $a$.
The following theorems state error estimates for this modified gradient scheme.

\begin{theorem}[Error estimates for the Signorini problem with approximate bar\-rier]
\label{errorseepage-approx}
~
Under the assumptions of Theorem \ref{errorseepage}, if $\widetilde{\mathcal K}_\disc$ is not empty then there exists
a unique solution $u$ to the gradient scheme \eqref{gsseepage} in which $\mathcal K_\disc$ has been replaced with $\widetilde{\mathcal K}_\disc$.
Moreover, if $a-a_\disc\in H^{1/2}_{\Gamma_1}(\partial\O)$, Estimates \eqref{errorseepage1} and \eqref{errorseepage2} hold for any $v_\disc \in \widetilde{\mathcal K}_\disc$,
provided that $G_{\disc}(\bar{u},v_{\disc})$ is replaced with
\[
\widetilde{G}_{\disc}(\bar u,v_\disc)= G_\disc(\bar u,v_\disc)+ 
\langle \gamma_\bfn(\Lambda\nabla\bar{u}), a-a_{\disc}\rangle
\]
\end{theorem}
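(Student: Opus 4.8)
The plan is to re-run the proof of Theorem \ref{errorseepage} line by line, locate the \emph{single} place where the membership $u\in\mathcal K_\disc$ (equivalently $\mathbb T_\disc u\le a$ on $\Gamma_3$) was exploited, and repair it by a lifting argument that accounts for the discrepancy $a-a_\disc$. Existence and uniqueness are immediate: $\widetilde{\mathcal K}_\disc$ is, by hypothesis, a non-empty closed convex subset of $X_{\disc,\Gamma_1}$, so Stampacchia's theorem applies verbatim as in the proof of Theorem \ref{errorseepage}. For the estimates, the chain \eqref{seepagepr1}--\eqref{seepagepr2} is purely algebraic and independent of which discrete convex set is used, so it carries over unchanged. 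The only break occurs at \eqref{seepagepr3.1}: there one picks $w\in H^1(\O)$ with $\gamma w=\mathbb T_\disc u$ and concludes $w\in\mathcal K$, which used precisely that $\mathbb T_\disc u\le a$ on $\Gamma_3$; now we only have $\mathbb T_\disc u\le a_\disc$.

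The key new ingredient is therefore a lifting of the barrier mismatch. Since $a-a_\disc\in H^{1/2}_{\Gamma_1}(\partial\O)\subset H^{1/2}(\partial\O)$, the surjectivity of the trace $\gamma:H^1(\O)\to H^{1/2}(\partial\O)$ furnishes $\ell\in H^1(\O)$ with $\gamma\ell=a-a_\disc$; in particular $\gamma\ell=0$ on $\Gamma_1$. Setting $\tilde w:=w+\ell$, I would check that $\gamma\tilde w=0$ on $\Gamma_1$ and that on $\Gamma_3$ one has $\gamma\tilde w=\mathbb T_\disc u+(a-a_\disc)\le a_\disc+(a-a_\disc)=a$, so that $\tilde w\in\mathcal K$ and is admissible in \eqref{weseepage}. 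Re-running the computation \eqref{seepagepr3}--\eqref{seepagepr3.1} with $\tilde w$ in place of $w$, and writing $\gamma\bar u-\mathbb T_\disc u=(\gamma\bar u-\gamma\tilde w)+(a-a_\disc)$, the normal-trace Green formula \eqref{trace} together with $-\div(\Lambda\nabla\bar u)=f$ and the weak formulation tested against $\tilde w\in\mathcal K$ yields
\[
\langle\gamma_\bfn(\Lambda\nabla\bar u),\gamma\bar u-\mathbb T_\disc u\rangle\le\langle\gamma_\bfn(\Lambda\nabla\bar u),a-a_\disc\rangle,
\]
the approximate-barrier analogue of \eqref{seepagepr3.1} (the first term being $\le 0$ exactly as before).

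Substituting this into \eqref{seepagepr2} turns the right-hand side into $G_\disc(\bar u,v_\disc)+\langle\gamma_\bfn(\Lambda\nabla\bar u),a-a_\disc\rangle=\widetilde G_\disc(\bar u,v_\disc)$, so that $G_\disc$ is simply replaced by $\widetilde G_\disc$ in \eqref{takeover}. From \eqref{takeover} onward nothing changes: using that $u$ solves the scheme \eqref{gsseepage} (now posed on $\widetilde{\mathcal K}_\disc$, with $v_\disc\in\widetilde{\mathcal K}_\disc$ as test function) to bound the $f$-term from below, then Cauchy--Schwarz, Young's inequality, and the triangle inequalities on $\nabla_\disc$ and $\Pi_\disc$, delivers \eqref{errorseepage1} and \eqref{errorseepage2} with $G_\disc$ replaced by $\widetilde G_\disc$. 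The main obstacle is the bookkeeping around the lifting $\ell$: one must ensure it vanishes on $\Gamma_1$ (so $\tilde w$ preserves the homogeneous Dirichlet condition and lands in $\mathcal K$) while reproducing $a-a_\disc$ on $\Gamma_3$, and track that the extra duality term enters additively with the correct sign. The hypothesis $a-a_\disc\in H^{1/2}_{\Gamma_1}(\partial\O)$ is exactly what makes such a lifting available with the required boundary behaviour.
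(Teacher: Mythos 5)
Your proposal is correct and follows essentially the same route as the paper's proof: the paper also reduces everything to bounding $\langle \gamma_\bfn(\Lambda\nabla\bar{u}),\gamma\bar{u}-(\mathbb{T}_{\mathcal{D}}u+a-a_{\disc})\rangle$ by $0$, choosing a single $w\in H^1(\O)$ with $\gamma w=\mathbb{T}_\disc u+a-a_\disc$ (which lies in $\mathcal K$ precisely because $a-a_\disc\in H^{1/2}_{\Gamma_1}(\partial\O)$ and $\mathbb{T}_\disc u\le a_\disc$ on $\Gamma_3$), whereas you build the same function as $w+\ell$ in two steps. The decomposition of the duality product, the sign argument via the weak formulation tested against the lifted function, and the propagation of $\widetilde G_\disc$ through the remainder of the proof of Theorem \ref{errorseepage} all coincide with the paper's argument.
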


\begin{proof}
The proof is identical to that of Theorem \ref{errorseepage}, provided we can control the left-hand
side of \eqref{seepagepr2}. For any $v_{\disc} \in \widetilde{\mathcal{K}}_{\disc}$, we write
\begin{multline*}
\langle \gamma_\bfn(\Lambda\nabla\bar{u}), \mathbb{T}_{\mathcal{D}}(v_{\mathcal{D}}-u) \rangle=
\langle \gamma_\bfn(\Lambda\nabla\bar{u}), \mathbb{T}_{\mathcal{D}}v_{\mathcal{D}}-\gamma\bar{u} \rangle\\
+\langle \gamma_\bfn(\Lambda\nabla\bar{u}),\gamma\bar{u}-(\mathbb{T}_{\mathcal{D}}u+a-a_{\disc})\rangle
+\langle \gamma_\bfn(\Lambda\nabla\bar{u}), a-a_{\disc}\rangle.
\end{multline*}
We note that the first term in the right-hand side is $G_\disc(\bar u,v_\disc)$;
hence, the first and last terms in this right-hand side sums up to
$\widetilde{G}_\disc(\bar u,v_\disc)$ and we only need to prove that
the second term is non-positive. We take $w \in H^1(\O)$ such that $\gamma(\bar w)= \mathbb{T}_{\disc}u+a-a_{\disc}$, and we notice that $w$ belongs to $\mathcal K$
since $\mathbb{T}_\disc u$ and $a-a_\disc$ both belong to $H^{1/2}_{\Gamma_1}(\partial\O)$,
and since $\mathbb{T}_\disc u+a-a_\disc\le a$ on $\Gamma_3$. Similarly to
\eqref{seepagepr3}, the definition of $w$ shows that
\[
\langle \gamma_\bfn(\Lambda\nabla\bar{u}),\gamma\bar{u}-(\mathbb{T}_{\mathcal{D}}u+a-a_{\disc})\rangle
=\int_\O \Lambda\nabla\bar u\cdot\nabla(\bar u-w)\ud x-\int_\O f(\bar u-w)\ud x.
\]
Hence, by \eqref{weseepage}, $\langle \gamma_\bfn(\Lambda\nabla\bar{u}),\gamma\bar{u}-(\mathbb{T}_{\mathcal{D}}u+a-a_{\disc})\rangle\le 0$ and, as required, the left-hand side
of \eqref{seepagepr2} is bounded above by $\widetilde{G}_\disc(\bar u,v_\disc)$.
\end{proof}

In the case of the $\mathbb{P}_1$ finite element method, $a_\disc$ is the $\mathbb{P}_1$ approximation of $a$ on $\dr\O$ constructed from its values
at the nodes. The interpolant $v_\disc$ of $\bar u$ mentioned in Section \ref{sec:improvesig} is bounded above
by $a=a_\disc$ at the nodes, and therefore everywhere by $a_\disc$; $v_\disc$ therefore belongs to $\widetilde{\mathcal K}_\disc$
and can be used in Theorem \ref{errorseepage-approx}. Under $H^2$ regularity of $a$, we have
$||a_\disc-a||_{L^2(\dr\O)}=\mathcal O(h_{\mathcal M}^2)$. Hence, if $\gamma_\bfn(\Lambda\nabla\bar u)\in L^2(\dr\O)$, we see that $\widetilde{G}_\disc(\bar u,v_\disc)=\mathcal O(h_{\mathcal M}^2)$
and Theorem \ref{errorseepage-approx} thus gives an order one estimate.

For several low-order methods (e.g. HMM, see \cite{B1}), the interpolant $v_\disc$ of the exact
solution $\bar u$ is constructed such that $\mathbb{T}_\disc v_\disc=\gamma(\bar u)(\centeredge)$ on each
$\edge\in\edgesext$.
The natural approximate barrier is then piecewise constant, and an order one error estimate
can be obtained as shown in the next result.

\begin{theorem}[Signorini problem: order of convergence for piecewise constant reconstructions]
Under Hypothesis \ref{hyseepage}, let $\disc$ be a gradient discretisation, in the sense of Definition \ref{gdseepage}, and let $\polyd=(\mesh,\edges,\centers)$ be a polytopal
mesh of $\O$. Let $a_\disc\in L^2(\partial\O)$ be
such that $a_\disc-a=0$ on $\Gamma_1$ and, for any $\sigma\in\edgesext$ such that
$\sigma\subset \Gamma_3$, $a_\disc=a(x_\sigma)$ on $\sigma$, where $x_\sigma\in\sigma$.
Let $\bar u$ be the solution to \eqref{weseepage}, and let us assume that
$\gamma_\bfn(\Lambda\nabla\bar u)\in L^2(\partial\O)$ and that $\gamma(\bar u)-a\in W^{2,\infty}(\partial\O)$.
We also assume that there exists an interpolant $v_{\mathcal{D}} \in X_{\disc,\Gamma_{1}}$
such that $S_{\mathcal{D}}(\bar{u},v_{\mathcal{D}})\le C_1h_{\mathcal M}$ with $C_1$ not depending on $\disc$ or $\polyd$, and, for any $\sigma\subset \Gamma_3$, $\mathbb T_{\mathcal{D}}v_{\mathcal{D}}=\gamma(\bar{u})(x_{\sigma})$ on $\sigma$.

Then $\widetilde{\mathcal K}_\disc\not=\emptyset$ and, if $u$ is the solution to \eqref{gsseepage} with $\mathcal K_\disc$
replaced with $\widetilde{\mathcal K}_\disc$, it holds
\begin{equation}
\|\nabla_{\mathcal{D}}u - \nabla\bar{u}\|_{L^{2}(\Omega)^{d}}
+ \|\Pi_{\mathcal{D}}u - \bar{u}\|_{L^{2}(\Omega)}
\leq Ch_{\mathcal M}+CW_{\disc}(\Lambda\nabla\bar u)
\label{eq:improverrorseepage2}\end{equation}
where $C$ depends only on $\O$, $\Lambda$, $\bar u$, $a$, $C_1$ and an upper bound of $C_\disc$.
\label{th:improvedapproxseepage}
\end{theorem}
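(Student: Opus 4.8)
The plan is to mirror the proof of Theorem~\ref{theorem:improveseepage}, the only genuinely new ingredient being the treatment of the approximate barrier $a_\disc$ through the modified indicator $\widetilde G_\disc$ of Theorem~\ref{errorseepage-approx}. Since $\gamma_\bfn(\Lambda\nabla\bar u)\in L^2(\partial\O)$, I would first place myself in the $L^2$ regime of Remark~\ref{rem:makesense}: the reconstructed trace $\mathbb T_\disc$ is allowed to be $L^2(\partial\O)$-valued (hence piecewise constant), the $H^{1/2}(\partial\O)$ norm in the definition \eqref{coercivityseepage} of $C_\disc$ is replaced by the $L^2(\partial\O)$ norm, and every duality product becomes a plain boundary integral. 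The regularity assumptions $\gamma(\bar u)-a\in W^{2,\infty}(\partial\O)$ and $\gamma_\bfn(\Lambda\nabla\bar u)\in L^2(\partial\O)$ also guarantee, as in Theorem~\ref{theorem:improveseepage}, that $\bar u$ solves the Signorini problem in the strong sense, so that the pointwise conditions \eqref{sigcondition} are available on $\Gamma_3$.

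I would next verify that $\widetilde{\mathcal K}_\disc\neq\emptyset$ by checking that the given interpolant $v_\disc$ lies in it. As $\bar u\in\mathcal K$ we have $\gamma(\bar u)\le a$ a.e.\ on $\Gamma_3$, and since $\gamma(\bar u)-a\in W^{2,\infty}(\partial\O)$ is continuous on each face, this inequality in fact holds everywhere on $\Gamma_3$. Evaluating at $x_\sigma$ then gives, on each $\sigma\subset\Gamma_3$, $\mathbb T_\disc v_\disc=\gamma(\bar u)(x_\sigma)\le a(x_\sigma)=a_\disc$, so $v_\disc\in\widetilde{\mathcal K}_\disc$; in particular the modified scheme has a (unique) solution $u\in\widetilde{\mathcal K}_\disc$, and $v_\disc$ is an admissible test interpolant.

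I then need the error estimate of Theorem~\ref{errorseepage-approx} in this $L^2$ setting, and here lies the main obstacle. A piecewise constant $a_\disc$ does \emph{not} belong to $H^{1/2}_{\Gamma_1}(\partial\O)$ (its jumps across edges preclude $H^{1/2}$ membership), so the hypothesis $a-a_\disc\in H^{1/2}_{\Gamma_1}(\partial\O)$ fails and the lifting argument in the proof of Theorem~\ref{errorseepage-approx} — constructing $w\in H^1(\O)$ with $\gamma w=\mathbb T_\disc u+a-a_\disc$ — is unavailable. I would bypass it by a direct pointwise computation, legitimate precisely because $\gamma_\bfn(\Lambda\nabla\bar u)\in L^2(\partial\O)$ turns the relevant duality pairing into an honest integral. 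Concretely, I would show that
\[
\int_{\partial\O}\gamma_\bfn(\Lambda\nabla\bar u)\,\big(\gamma\bar u-\mathbb T_\disc u-(a-a_\disc)\big)\ud x\le 0,
\]
using that the integrand vanishes on $\Gamma_1$ (where $\gamma\bar u=\mathbb T_\disc u=0$ and $a-a_\disc=0$) and on $\Gamma_2$ (where $\gamma_\bfn(\Lambda\nabla\bar u)=0$), while on $\Gamma_3$ it is non-positive: where $\gamma\bar u<a$ the factor $\gamma_\bfn(\Lambda\nabla\bar u)$ vanishes by \eqref{sigcondition}, and where $\gamma\bar u=a$ one has $\gamma_\bfn(\Lambda\nabla\bar u)\le 0$ together with $\gamma\bar u-\mathbb T_\disc u-(a-a_\disc)=a_\disc-\mathbb T_\disc u\ge0$ since $u\in\widetilde{\mathcal K}_\disc$. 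This reproduces \eqref{errorseepage1}--\eqref{errorseepage2} with $G_\disc$ replaced by $\widetilde G_\disc$, hence a bound $\|\nabla_\disc u-\nabla\bar u\|_{L^2(\O)^d}+\|\Pi_\disc u-\bar u\|_{L^2(\O)}\le C\big(W_\disc(\Lambda\nabla\bar u)+S_\disc(\bar u,v_\disc)+\sqrt{\widetilde G_\disc(\bar u,v_\disc)^+}\big)$, with $C$ depending on $\underline\lambda$, $\overline\lambda$ and an upper bound of $C_\disc$.

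It remains to estimate $\widetilde G_\disc(\bar u,v_\disc)$, which I would do exactly as in Theorem~\ref{theorem:improveseepage}. Setting $w=\gamma\bar u-a$ and using $\gamma_\bfn(\Lambda\nabla\bar u)=0$ on $\Gamma_2$ together with the vanishing of the relevant factors on $\Gamma_1$, the pairing reduces to $\sum_{\sigma\subset\Gamma_3}\int_\sigma\gamma_\bfn(\Lambda\nabla\bar u)(w(x_\sigma)-w)\ud x$, where $\mathbb T_\disc v_\disc=\gamma\bar u(x_\sigma)$ and $a_\disc=a(x_\sigma)$ produce the constant $w(x_\sigma)$. The $w$-part of each edge term integrates to zero by the complementarity \eqref{sigcondition}, leaving $w(x_\sigma)\int_\sigma\gamma_\bfn(\Lambda\nabla\bar u)\ud x$. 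On edges with $w<0$ a.e.\ this vanishes (then $\gamma_\bfn(\Lambda\nabla\bar u)=0$), while on edges meeting $\{w=0\}$ in positive measure a Stampacchia argument yields a point $y_0$ with $w(y_0)=0$ and $\nabla_\sigma w(y_0)=0$, and a Taylor expansion along the face using $\gamma\bar u-a\in W^{2,\infty}(\partial\O)$ gives $|w(x_\sigma)|\le Ch_\sigma^2$. Bounding each edge term via Cauchy--Schwarz and summing over $\sigma\subset\Gamma_3$ then produces $\widetilde G_\disc(\bar u,v_\disc)^+\le Ch_\mesh^2$. Combining this with $S_\disc(\bar u,v_\disc)\le C_1h_\mesh$ in the estimate above gives \eqref{eq:improverrorseepage2}.
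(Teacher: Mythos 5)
Your proof is correct and follows the same overall route as the paper: check that $v_\disc\in\widetilde{\mathcal K}_\disc$, invoke the error estimate with $\widetilde G_\disc$ in place of $G_\disc$, and then bound $\widetilde G_\disc(\bar u,v_\disc)$ edge by edge, using the complementarity condition \eqref{sigcondition} to reduce it to $\sum_{\sigma\subset\Gamma_3}\int_\sigma(\gamma\bar u(x_\sigma)-a(x_\sigma))\gamma_\bfn(\Lambda\nabla\bar u)\ud x$ and the Stampacchia--Taylor argument of Theorem \ref{theorem:improveseepage} to get the $\mathcal O(h_\mesh^2)$ bound. The one place where you genuinely depart from the paper is the justification of the intermediate error estimate. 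The paper simply cites Theorem \ref{errorseepage-approx} together with Remark \ref{rem:makesense}, whereas you observe that the hypothesis $a-a_\disc\in H^{1/2}_{\Gamma_1}(\partial\O)$ of that theorem fails for a piecewise constant $a_\disc$, so the lifting $\gamma w=\mathbb T_\disc u+a-a_\disc$ used in its proof is unavailable. Your replacement --- a direct verification that $\int_{\partial\O}\gamma_\bfn(\Lambda\nabla\bar u)\bigl(\gamma\bar u-\mathbb T_\disc u-(a-a_\disc)\bigr)\ud x\le 0$, splitting $\partial\O$ into $\Gamma_1$, $\Gamma_2$ and the two regimes of \eqref{sigcondition} on $\Gamma_3$, and using $\mathbb T_\disc u\le a_\disc$ there --- is exactly what is needed to make the $L^2$ setting of Remark \ref{rem:makesense} rigorous, and it buys a self-contained argument where the paper's proof implicitly relies on re-running the proof of Theorem \ref{errorseepage-approx} with duality products replaced by boundary integrals. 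The rest of your argument (non-emptiness of $\widetilde{\mathcal K}_\disc$ via continuity of $\gamma\bar u-a$ on each face, the case split, the Taylor expansion \eqref{taylor-exp}, and the final Cauchy--Schwarz summation) coincides with the paper's computation.
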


\begin{proof}
Clearly, $v_{\disc}\in \widetilde{\mathcal K}_\disc$. The conclusion follows from Theorem \ref{errorseepage-approx}
if we prove that $\widetilde{G}_\disc(\bar u,v_\disc)^+=\mathcal O(h_{\mathcal M}^2)$. Note that, following Remark \ref{rem:makesense}, it makes sense to consider a piecewise
constant reconstructed trace $\mathbb{T}_\disc v$.
 
With our choices of $a_\disc$ and $\Pi_\disc v_\disc$, and using the fact that $\bar u$ is the solution to the Signorini problem in the strong sense (and satisfies therefore \eqref{sigcondition}), we have
\begin{align*}
\widetilde{G}_\disc(\bar u,v_\disc)&=
\int_{\Gamma_3} (\mathbb T_\disc v_\disc-\gamma\bar u)\gamma_\bfn(\Lambda\nabla\bar{u})\ud x
+\int_{\Gamma_3} (a-a_{\disc})\gamma_\bfn(\Lambda\nabla\bar{u})\ud x\\
&=
\int_{\Gamma_3} \gamma_\bfn(\Lambda\nabla\bar{u})(\mathbb T_\disc v_\disc-a_\disc)\ud x\\
&=\sum_{\sigma\in\edgesext,\,\sigma\subset \Gamma_3}\int_\sigma (\gamma\bar u (x_{\sigma})-a(x_\edge))\gamma_\bfn(\Lambda\nabla\bar{u})\ud x\\
&=:\sum_{\sigma\in\edgesext,\,\sigma\subset \Gamma_3}\widetilde G_\sigma(\bar u,v_\disc).
\end{align*}
We then deal edge by edge, considering two cases as in the proof of Theorem \ref{theorem:improveseepage}.
In Case (i), where $\gamma\bar u<a$ a.e. on $\sigma$, we have $\widetilde G_\sigma(\bar{u},v_{\mathcal{D}})=0$ since $\gamma_\bfn(\Lambda\nabla\bar{u})=0$ in $\sigma$. In Case (ii), we estimate $\widetilde G_\sigma(\bar u,v_\disc)$
using the Taylor expansion \eqref{taylor-exp}.
\end{proof}


\subsection{The obstacle problem}
\label{sec:approxbarriers obs}
With $g_\disc\in L^2(\O)$ an approximation of $g$, we consider the new convex set
\begin{equation}
\widetilde{\mathcal{K}}_{\disc} = \{ v \in X_{\disc,0}\;:\; \Pi_{\disc}v \leq g_{\disc} \; \mbox{in}\; \O \}
\end{equation}
and we write the gradient scheme \eqref{gsobs} with $\widetilde{\mathcal K}_\disc$ instead of $\mathcal K_\disc$.
The following theorems are the equivalent for the obstacle problem of
Theorems \ref{errorseepage-approx} and \ref{th:improvedapproxseepage}.

\begin{theorem}[Error estimates for the obstacle problem with approximate bar\-rier]
\label{errorobs-approx}
~
Under the assumptions of Theorem \ref{errorobs}, if $\widetilde{\mathcal K}_\disc$ is not empty then there exists
a unique solution $u$ to the gradient scheme \eqref{gsobs} in which $\mathcal K_\disc$ has been replaced with $\widetilde{\mathcal K}_\disc$.
Moreover, Estimates \eqref{errorobs1} and \eqref{errorobs2} hold for any $v_\disc \in \widetilde{\mathcal K}_\disc$,
provided that $E_{\disc}(\bar{u},v_{\disc})$ is replaced with
\[
\widetilde{E}_{\disc}(\bar u,v_\disc)= E_\disc(\bar u,v_\disc)+ \int_{\O} (\div(\Lambda\nabla\bar{u})+f)(g_\disc-g) \ud x.
\]
\end{theorem}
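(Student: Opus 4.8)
The plan is to run the proof of Theorem~\ref{errorobs} essentially verbatim, isolating the single place where replacing $\mathcal K_\disc$ by $\widetilde{\mathcal K}_\disc$ forces a change. First, since $\widetilde{\mathcal K}_\disc$ is convex, closed and assumed non-empty, the existence and uniqueness of a solution $u$ again follow from Stampacchia's theorem. The inequality \eqref{8}, obtained by using $\bpsi=\Lambda\nabla\bar u$ in the definition \eqref{conformityobs} of $W_\disc$, is unaffected, as is the purely algebraic identity \eqref{th:error-init}, since neither of them uses the constraint satisfied by $u$. The task therefore reduces to re-deriving \eqref{obsinequality} with $E_\disc$ replaced by $\widetilde E_\disc$.

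The crux is the first term on the right-hand side of \eqref{th:error-init}, namely $\int_\O(\Pi_\disc u-g)(\div(\Lambda\nabla\bar u)+f)\ud x$. In the original proof this was shown to be non-positive by combining $\Pi_\disc u\le g$ with $\div(\Lambda\nabla\bar u)+f\ge 0$ a.e.\ (the latter coming, as before, from \eqref{obs2} holding a.e.\ under our regularity assumptions). Here $u\in\widetilde{\mathcal K}_\disc$ only gives $\Pi_\disc u\le g_\disc$, so I would split
\[
\Pi_\disc u-g=(\Pi_\disc u-g_\disc)+(g_\disc-g).
\]
The contribution of the first summand, $\int_\O(\Pi_\disc u-g_\disc)(\div(\Lambda\nabla\bar u)+f)\ud x$, is non-positive because $\Pi_\disc u-g_\disc\le 0$ and $\div(\Lambda\nabla\bar u)+f\ge 0$. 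The contribution of the second summand, $\int_\O(g_\disc-g)(\div(\Lambda\nabla\bar u)+f)\ud x$, is precisely the extra term appearing in the definition of $\widetilde E_\disc(\bar u,v_\disc)$; it is \emph{not} estimated here but simply carried along, which is why the barrier error $g_\disc-g$ gets absorbed into the modified consistency quantity rather than producing a separate remainder.

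From this point the argument is identical to that of Theorem~\ref{errorobs}: in the remaining term of \eqref{th:error-init} I split $g-\Pi_\disc v_\disc=(g-\bar u)+(\bar u-\Pi_\disc v_\disc)$, the first piece vanishing by the complementarity relation \eqref{obs1} and the second assembling, together with the leftover $\int_\O(g_\disc-g)(\div(\Lambda\nabla\bar u)+f)\ud x$, into $\widetilde E_\disc(\bar u,v_\disc)$. This yields the analogue of \eqref{obsinequality} with $E_\disc$ replaced by $\widetilde E_\disc$; inserting it into \eqref{8} with $w=v_\disc-u$ and running the Cauchy--Schwarz and Young estimates from \eqref{takeover} onward reproduces \eqref{errorobs1} and \eqref{errorobs2} with $E_\disc$ replaced by $\widetilde E_\disc$. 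The only genuinely new point, and the one place where care is needed, is the sign bookkeeping in the split of the first term of \eqref{th:error-init}: one must use that it is the approximate barrier $g_\disc$ (not $g$) that controls $\Pi_\disc u$, so that the non-positive term survives and the discrepancy $g_\disc-g$ is folded into $\widetilde E_\disc$.
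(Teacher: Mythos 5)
Your proposal is correct and follows essentially the same route as the paper: the paper substitutes $g_\disc$ for $g$ throughout \eqref{th:error-init} and then splits $g_\disc-\Pi_\disc v_\disc=(g_\disc-g)+(g-\Pi_\disc v_\disc)$ in the second term, whereas you split $\Pi_\disc u-g=(\Pi_\disc u-g_\disc)+(g_\disc-g)$ in the first term; since the two terms sum to the same quantity, this is the same decomposition, isolating the same extra contribution $\int_\O(g_\disc-g)(\div(\Lambda\nabla\bar u)+f)\ud x$ and using $\Pi_\disc u\le g_\disc$ with $\div(\Lambda\nabla\bar u)+f\ge 0$ for the sign. Nothing is missing.
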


\begin{proof} We follow exactly the proof of Theorem \ref{errorobs}, except that we introduce $g_\disc$ instead
of $g$ in \eqref{th:error-init}. The first term in the right-hand side of this equation is then still bounded above by $0$, and the
second term is written
\begin{multline*}
\int_\O (g_\disc-\Pi_\disc v_\disc)(\div(\Lambda\nabla \bar u)+f)\ud x
=\int_\O (g_\disc-g)(\div(\Lambda\nabla \bar u)+f)\ud x\\
+\int_\O (g-\Pi_\disc v_\disc)(\div(\Lambda\nabla \bar u)+f)\ud x.
\end{multline*}
The first term in this right-hand side corresponds to the additional term in $\widetilde{E}_\disc(\bar u,v_\disc)$,
whereas the second term is the one handled in the proof of Theorem \ref{errorobs}.
\end{proof}

\begin{theorem}[Obstacle problem: order of convergence for piecewise constant reconstructions]
Let $\disc$ be a gradient discretisation, in the sense of Definition \ref{gdobs},
and let $\polyd=(\mesh,\edges,\centers)$ be a polytopal mesh of $\O$. Let $g_\disc\in L^2(\O)$ be defined by
\[
\forall K\in\mesh\,,\; g_\disc=g(x_K)\mbox{ on $K$}.
\]
Let $\bar u$ be the solution to \eqref{weobs}, and let us assume that
$\div(\Lambda\nabla\bar u)\in L^2(\O)$ and that 
$\bar u-g \in W^{2,\infty}(\O)$.
We also assume that there exists an interpolant $v_{\mathcal{D}} \in X_{\disc,0}$
such that $S_{\mathcal{D}}(\bar{u},v_{\mathcal{D}})\le C_1h_{\mathcal M}$, with $C_1$ not depending on $\disc$ or $\polyd$, and that
$\Pi_{\mathcal{D}}v_{\mathcal{D}}=\bar{u}(x_{K})$ on $K$, for any $K\in\mesh$.

Then $\widetilde{\mathcal K}_\disc\not=\emptyset$ and, if $u$ is the solution to \eqref{gsobs} with $\mathcal K_\disc$
replaced with $\widetilde{\mathcal K}_\disc$,
\begin{equation}
\|\nabla_{\mathcal{D}}u - \nabla\bar{u}\|_{L^{2}(\Omega)^{d}}
+ \|\Pi_{\mathcal{D}}u - \bar{u}\|_{L^{2}(\Omega)}
\leq Ch_{\mathcal M}+CW_{\disc}(\Lambda\nabla\bar u)
\label{eq:improverrorobs2}\end{equation}
where $C$ depends only on $\O$, $\Lambda$, $\bar u$, $g$, $C_1$ and an upper bound of $C_\disc$.
\label{th:improvedapproxobs}\end{theorem}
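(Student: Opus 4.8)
The plan is to deduce everything from Theorem~\ref{errorobs-approx}, so that the only genuinely new work is to bound the modified consistency term $\widetilde{E}_\disc(\bar u,v_\disc)^+$; this follows the strategy of the proof of Theorem~\ref{theorem:improveobse}, with the crucial twist that the \emph{combined} function $w:=\bar u-g$ now plays the role that $\bar u$ played there.

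First I would verify that $\widetilde{\mathcal K}_\disc\neq\emptyset$ by checking that the interpolant $v_\disc$ itself lies in it. Indeed $\bar u\in\mathcal K$ gives $\bar u\le g$ in $\O$, and since $w=\bar u-g\in W^{2,\infty}(\O)\subset C^0(\overline\O)$ this inequality holds pointwise everywhere; in particular $\Pi_\disc v_\disc=\bar u(x_K)\le g(x_K)=g_\disc$ on each cell $K$, so $v_\disc\in\widetilde{\mathcal K}_\disc$. Theorem~\ref{errorobs-approx} then grants existence and uniqueness of $u$ together with the analogues of \eqref{errorobs1}--\eqref{errorobs2} in which $E_\disc$ is replaced by $\widetilde{E}_\disc$. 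Since $S_\disc(\bar u,v_\disc)\le C_1h_\mesh$ by hypothesis and the term $W_\disc(\Lambda\nabla\bar u)$ is carried unchanged, the estimate \eqref{eq:improverrorobs2} will follow as soon as I show $\widetilde{E}_\disc(\bar u,v_\disc)^+=\mathcal O(h_\mesh^2)$.

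Next I would rewrite the modified consistency term. Using $\Pi_\disc v_\disc=\bar u(x_K)$ and $g_\disc=g(x_K)$ on $K$, the two contributions to $\widetilde{E}_\disc$ recombine as
\[
\widetilde{E}_\disc(\bar u,v_\disc)=\sum_{K\in\mesh}\int_K (\div(\Lambda\nabla\bar u)+f)\,\big(w-w(x_K)\big)\ud x=:\sum_{K\in\mesh}\widetilde{E}_K,
\]
with $w=\bar u-g$. I then split cell by cell into the two cases of Theorem~\ref{theorem:improveobse}: if $\bar u<g$ on $K$ then \eqref{obs1} forces $\div(\Lambda\nabla\bar u)+f=0$ a.e.\ on $K$, whence $\widetilde{E}_K=0$; otherwise $\{y\in K:w(y)=0\}$ has positive measure and Stampacchia's lemma (applied to $w\in W^{1,1}(\O)$) furnishes a point $y_0\in K$ with $w(y_0)=0$ and $\nabla w(y_0)=0$.

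The heart of the argument, and the step I expect to be the main obstacle, is the Taylor bound $|w(y)-w(x_K)|\le Ch_K^2$ for every $y\in K$ in the second case, with $C$ controlled by $\|\bar u-g\|_{W^{2,\infty}(\O)}$. The subtlety is that the segment $[y_0,y]$ need not lie in $K$, so one cannot expand directly from $y_0$. Instead I would use the star-shapedness of $K$ with respect to $x_K$ (Definition~\ref{def:polymesh}): the segment $[x_K,y]$ does lie in $K$, and a Taylor expansion along it gives $|w(y)-w(x_K)|\le|\nabla w(x_K)|\,h_K+\tfrac12\|D^2w\|_{L^\infty(\O)}h_K^2$; since $\nabla w$ is Lipschitz and $\nabla w(y_0)=0$, one has $|\nabla w(x_K)|=|\nabla w(x_K)-\nabla w(y_0)|\le Ch_K$, which yields the desired $\mathcal O(h_K^2)$ bound. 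From here Cauchy--Schwarz on each cell gives $|\widetilde{E}_K|\le Ch_K^2\|\div(\Lambda\nabla\bar u)+f\|_{L^2(K)}|K|^{1/2}$, and summing over $K$ with a second Cauchy--Schwarz over the mesh produces $|\widetilde{E}_\disc|\le Ch_\mesh^2\|\div(\Lambda\nabla\bar u)+f\|_{L^2(\O)}|\O|^{1/2}$. Feeding $\widetilde{E}_\disc^+=\mathcal O(h_\mesh^2)$, $S_\disc=\mathcal O(h_\mesh)$ and the unchanged $W_\disc$ term into Theorem~\ref{errorobs-approx} gives \eqref{eq:improverrorobs2}.
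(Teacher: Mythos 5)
Your proposal is correct and follows essentially the same route as the paper, which proves this theorem by transposing the argument of Theorem~\ref{th:improvedapproxseepage}: verify $v_\disc\in\widetilde{\mathcal K}_\disc$, invoke Theorem~\ref{errorobs-approx}, recombine $\widetilde E_\disc$ cell by cell into $\int_K F\,(w-w(x_K))$ with $w=\bar u-g$, and conclude by the case split with Stampacchia's lemma and a second-order Taylor expansion. Your explicit handling of the Taylor step via the segment $[x_K,y]$ and the star-shapedness of $K$ (rather than expanding directly from $y_0$) is a welcome precision that the paper leaves implicit.
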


\begin{proof}
The proof can be conducted by following the same ideas as in the proof of Theorem \ref{th:improvedapproxseepage}.
\end{proof}

\medskip

Let us now compare our results with previous studies. In \cite{C3,F1},
$\mathcal O(h_\mesh)$ error estimates are established
for $\mathbb{P}_1$ and mixed finite elements applied to the Signorini problem and the obstacle problem. 
This order was obtained under the assumptions that $\bar u \in W^{1,\infty}(\O) \cap H^{2}(\O)$ and $a$ is constant
for the Signorini problem and that $\Lambda\equiv I_{d}$, that
$\bar u$ and $g$ are in $H^{2}(\O)$ for the case of the obstacle problem. 
Our results generalise these orders of convergence to the case of a Lipschitz-continuous $\Lambda$
and a non-constant $a$ (note that mixed finite elements are also part of the gradient schemes
framework, see \cite{B10,S1}).

Studies of non-conforming methods for variational inequalities are more scarce.
We cite \cite{A30}, that applies Crouzeix--Raviart methods to the obstacle problem and obtains an order $\mathcal O (h_\mesh)$ under strong regularity assumptions, namely $f \in L^{\infty}(\O)$, $\bar u -g \in W^{2,\infty}(\O)$, $g \in H^{2}(\O)$, $\Lambda\equiv I_{d}$ and the free boundary has a finite length. For the Signorini problem, under the assumptions that $a$ is constant, $\bar u \in W^{2,\infty}(\O)$ and $\Lambda\equiv I_{d}$, Wang and Hua \cite{A-31} give a proof of an order $\mathcal O (h_\mesh)$ for Crouzeix--Raviart methods.
The natural Crouzeix--Raviart interpolants are piecewise linear on the edges and the cells, and
if we therefore use it as $a_\disc$ for the Signorini problem, we see that under an $H^2$ regularity assumption on $a$,
we have $||a-a_\disc||_{L^2(\dr\O)}\le Ch_\mesh^2$.
Hence, the additional term in $\widetilde{G}_\disc(\bar u,v_\disc)$ in
Theorem \ref{errorseepage-approx} is of order $h_\mesh^2$ and this theorem therefore
gives back the known $\mathcal O(h_\mesh)$ order of convergence for the Crouzeix--Raviart
scheme applied to the Signorini problem. This is obtained under
slightly more general assumptions, since $\Lambda$ does not need to be ${\rm Id}$ here.
The same applies to the obstacle problem through Theorem \ref{theorem:improveobse}.

As shown in Section \ref{sec:examples}, our results also allowed us to obtain brand new orders of convergences, for methods not previously studied for (but relevant to) the Signorini or the obstacle problem.

We finally notice that most of previous research investigates the Signorini problem under the assumption that the barrier $a$ is a constant. On the contrary, Theorems \ref{errorseepage}, \ref{errorseepage-approx} and \ref{th:improvedapproxseepage} presented here are valid for non-constant barriers.

\section{Conclusion}

We developed a gradient scheme framework for two kinds of variational inequalities, the Signorini problem and the obstacle problem.
We showed that several classical conforming or non-conforming
numerical methods for diffusion equations fit into this framework,
and we established general error estimates and orders of convergence for all
methods in this framework, both for exact and approximate barriers.

These results present new, and simpler, proofs of optimal orders of convergence 
for some methods previously studied for variational inequalities.
They also allowed us to establish new orders of convergence for
recent methods, designed to deal with anisotropic heterogeneous
diffusion PDEs on generic grids but not yet studied for the obstacle or the Signorini problem.

We designed an HMM method for variational inequalities, and showed through numerical tests
(including a new test case with analytical solution) the efficiency of these
method.

\bigskip

\textbf{Acknowledgement}: the authors thank Claire Chainais-Hillairet for providing the initial MATLAB HMM code.



\section{Appendix }
\label{traceoperator}

We recall here some classical notions  on the normal trace of a function in $H_{\rm{div}}$. Let $\Omega \subset \mathbb{R}^{d}$ be a Lipschtiz domain. Then there exists a surjective trace mapping $\gamma: H^{1}(\Omega) \rightarrow H^{1/2}(\partial\Omega)$. 
For $\bpsi \in H_{\rm{div}}$, the normal trace $\gamma_{\bfn}(\bpsi) \in (H^{1/2}(\partial\Omega))'$  of $\bpsi$ is defined by 
\begin{equation}
\langle \gamma_{\bfn}(\bpsi), \gamma (w) \rangle=
\int_{\Omega}\bpsi\cdot\nabla w dx + \int_{\Omega}{\rm{div}}\bpsi\cdot w \ud x, \quad \mbox{for any}\; w \in H^{1}(\Omega), \label{trace}\end{equation}
where $\langle \cdot, \cdot\rangle$ denotes the duality product between $H^{1/2}(\partial\Omega)$ and $(H^{1/2}(\partial\Omega))'$. Stoke's formula in $H^1_0(\O)\times H_{\div}(\O)$ shows that this definition makes sense, i.e. the right-hand side is unchanged if we apply
to $\widetilde{w}\in H^1(\O)$ such that $\gamma(w)=\gamma(\widetilde{w})$.


\bibliographystyle{siam}
\bibliography{ref2}

\end{document}